\theoremstyle{plain}
\newtheorem{theorem}{Theorem}[section]
\newtheorem{proposition}[theorem]{Proposition}
\newtheorem{lemma}[theorem]{Lemma}
\theoremstyle{definition}
\newtheorem{definition}[theorem]{Definition}
\theoremstyle{remark}
\newtheorem{example}[theorem]{Example}
\newtheorem{remark}[theorem]{Remark}
\begin{document}
\title[SLag and Lag.sss in cones over toric Sasaki manifolds]{Special Lagrangians and Lagrangian self-similar solutions in cones over toric Sasaki manifolds}
\author{Hikaru Yamamoto}
\address{Department of Mathematics, Tokyo Institute of Technology, 2-12-1,
O-okayama, Meguro, Tokyo 152-8551, Japan}
\email{yamamoto.h.ah@m.titech.ac.jp}
\date{\today}
\begin{abstract} 
We construct some examples of special Lagrangian submanifolds 
and Lagrangian self-similar solutions in almost Calabi--Yau cones over toric Sasaki manifolds. 
For example, for any integer $g\geq 1$, 
we can construct a real $6$-dimensional Calabi--Yau cone $M_{g}$ 
and a $3$-dimensional special Lagrangian submanifold $F^{1}_{g}:L_{g}^{1}\rightarrow M_{g}$ which is diffeomorphic to $\Sigma_{g} \times \mathbb{R}$ 
and a compact Lagrangian self-shrinker $F^{2}_{g}:L_{g}^{2}\rightarrow M_{g}$ which is  
diffeomorphic to $\Sigma_{g} \times S^1$, where $\Sigma_{g}$ is a closed surface of genus $g$. 
\end{abstract}
\keywords{toric Sasaki manifold, special Lagrangian, self-similar solution}
\subjclass[2000]{Primary 53C42, Secondary 53C21, 53C25, 53C44}

\maketitle
\section{Introduction}\label{Intro}
Special Lagrangian submanifolds are defined in almost Calabi-Yau manifolds. 
 Recently special Lagrangian submanifolds have acquired an important role in Mirror Symmetry. 
For example, they are key words in the Strominger--Yau--Zaslow Conjecture \cite{StromingerYauZaslow} which explains Mirror Symmetry of $3$-dimensional Calabi--Yau manifolds. 
Furthermore Thomas and Yau \cite{ThomasYau} introduced a stability condition for graded Lagrangians 
and conjectured that a stable Lagrangian converges to a special Lagrangian submanifold by the mean curvature flow. 

In this conjecture, the mean curvature flow is also one of important key words. 
Simply stated, mean curvature flows are gradient flows of volume functionals of manifolds. 
In a precise sense, it is a flow of a manifold in a Riemannian manifold moving along its mean curvature vector field. 
Let $(M,g)$ be a Riemannian manifold, $N$ a manifold and $F:N\times[0,T)\rightarrow M$ a smooth family of immersions, 
then $F$ is called a mean curvature flow if it satisfies
\begin{align*}
\frac{\partial F}{\partial t}(p,t) = H_{t}(p) \hspace{5mm}\mathrm{~for~all~}(p, t) \in N\times [0, T)
\end{align*}
where $H_{t}$ is the mean curvature vector field of the immersion $F_{t} := F(\cdot, t) : N\rightarrow M$. 
If the ambient is $\mathbb{R}^{m}$, there is an important class of solutions called {\it self-similar solution}. 
An immersion of a manifold $F:N\rightarrow\mathbb{R}^m$ is called a self-similar solution if it satisfies 
\begin{align*}
H=\lambda F^{\bot}
\end{align*}
where $\lambda \in \mathbb{R}$ is a constant and $F^{\bot}$ is the normal part of the position vector $F$. 
Huisken \cite{Huisken} has studied mean curvature flows in $\mathbb{R}^m$ 
and proved that if the mean curvature flow in $\mathbb{R}^m$ has the type I singularity, 
then there exists a smoothly convergent subsequence of the rescaling such that its limit becomes a self-similar solution. 
In this sense, a self-similar solution can be thought of as an asymptotical  model of a mean curvature flow which develops a type I singularity at the time when it blowups. 

In this paper, we construct Lagrangian self-similar solutions in {\it cone manifolds}. 
To define self-similar solutions in cone manifolds, we use the generalization of position vectors in $\mathbb{R}^m$ to cone manifolds defined by Futaki, Hattori and the author in \cite{FutakiHattoriYamamoto}. 

Here we introduce some notations over cone manifolds. 
First, for a Riemannian manifold $(S,g)$, we say that $(C(S),\overline{g})$ is a cone over $(S,g)$, 
if $C(S) \cong S\times\mathbb{R}^{+}$ and $\overline{g}=r^2g+dr^2$ where $r$ is the standard coordinate of $\mathbb{R}^+$. 
We denote two projections by $\pi:C(S)\rightarrow S$ and $r:C(S)\rightarrow \mathbb{R}^{+}$. 
On the cone $C(S)$, there is a natural $\mathbb{R}^{+}$-action defined below. 
This action can be considered as an expansion or shrinking on the cone. 
\begin{definition}\label{raction}
We define the $\mathbb{R}^{+}$-action on $C(S)$ by 
\begin{align*}
\rho \cdot p_{0} =(s_{0},\rho r_{0})\quad \in C(S)\cong S\times\mathbb{R}^{+}
\end{align*}
for all $\rho \in \mathbb{R}^{+}$ and $p_{0}=(s_{0},r_{0}) \in C(S)$. 
\end{definition}

\begin{definition}
For a point $p_{0}=(s_{0},r_{0}) \in S\times\mathbb{R}^{+} \cong C(S)$, we define the position vector $\overrightarrow{p_{0}}$ by 
$$\overrightarrow{{p}_{0}}=r_{0}\frac{\partial}{\partial r}\bigg|_{r=r_{0}}\quad \in T_{p_{0}}C(S). $$
Furthermore, for a map $F:N\rightarrow C(S)$ from a manifold $N$, we define the position vector $\overrightarrow{F}$ of $F$ by $\overrightarrow{F}(x):=\overrightarrow{F(x)}$  at $x\in N$. 
Note that $\overrightarrow{F}$ is a section of $F^{*}(TC(S))$ over $N$. 
\end{definition}
Clearly $\overrightarrow{p_{0}}$ coincides with the derivative of the curve $c(\rho):=\rho\cdot p_{0}$ in $C(S)$ at $\rho=1$, that is, 
$$\overrightarrow{p_{0}}=\frac{d}{d\rho}\biggl |_{\rho=1}(\rho\cdot p_{0}). $$
Using this generalization of the position vector, we can define self-similar solutions in cone manifolds. 
\begin{definition}
Let $N$ be a manifold. 
An immersion $F:N \rightarrow C(S)$ is called a self-similar solution if 
\begin{align*}
 H = \lambda {\overrightarrow{F}}^{\bot}
\end{align*}
where $\lambda \in \mathbb{R}$ is a constant. 
It is called a self-shrinker if $\lambda<0$ and self-expander if $\lambda>0$. 
\end{definition}
Here $\bot$ is the orthogonal projection map from $F^{*}(TC(S))$ to $T^{\bot}N$ which is an orthogonal complement of $F_{*}(TN)$. 
Furthermore if a self-similar solution in a K\"ahler manifold is a Lagrangian submanifold, then we call it a Lagrangian self-similar solution.  

The typical results in $\mathbb{R}^n$ studied by Huisken  \cite{Huisken} are extended to the mean curvature flow in a cone manifold by Futaki, Hattori and the author in \cite{FutakiHattoriYamamoto}. 
For example, it is proved in \cite{FutakiHattoriYamamoto} that if a mean curvature flow in a cone manifold has the type $\mathrm{I}_{c}$ singularity, 
then there exists a smoothly convergent subsequence of the rescaling such that its limit becomes a self-similar solution. 
Type $\mathrm{I}_{c}$ singularity is a certain kind of singularity similar to type I singularity, and for more details refer to \cite{FutakiHattoriYamamoto}. 

In this paper, we present a method of constructing special Lagrangian submanifolds and Lagrangian self-similar solutions in toric Calabi--Yau cones. 
First we construct Lagrangian submanifolds in toric K\"ahler cone in Theorem \ref{Lag}. 
Next, if the canonical line bundle of the toric K\"ahler cone is trivial, that is, it is a toric almost Calabi--Yau cone, then 
we construct special Lagrangian submanifolds in Theorem \ref{54321} and Theorem \ref{12345}, and 
Lagrangian self-similar solutions in Theorem \ref{11223}. 
These constructions are considered to be a kind of extension of special Lagrangian submanifolds in $\mathbb{C}^m$ by Harvey and Lawson \cite{HarveyLawson} and 
Lagrangian self-similar solutions in $\mathbb{C}^m$ by Joyce, Lee and Tsui in \cite{JoyceLeeTsui}, see Remark \ref{HL} and Remark \ref{JLT123}. 
As an application of these theorems, we concretely construct some examples. 
\begin{example}[cf. Example \ref{ex1}]\label{ex-intro}
For any integer $g\geq 1$, 
we construct a real $6$-dimensional Calabi--Yau cone $M_{g}$ 
and a $3$-dimensional special Lagrangian submanifold $F^{1}_{g}:L_{g}^{1}\rightarrow M_{g}$ which is diffeomorphic to $\Sigma_{g} \times \mathbb{R}$ 
and a compact Lagrangian self-similar solution (self-shrinker) $F^{2}_{g}:L_{g}^{2}\rightarrow M_{g}$ which is  
diffeomorphic to $\Sigma_{g} \times S^1$ concretely, where $\Sigma_{g}$ is a closed surface of genus $g$. 
\end{example}

This paper is organized as follows. 
In Section \ref{TS}, we introduce some basic definitions and propositions in toric Sasaki manifolds. 
In Section \ref{CL}, we construct Lagrangian submanifolds in cones over toric Sasaki manifolds. 
In Section \ref{aCY}, we explain some details about almost Calabi--Yau manifolds, Lagrangian angles, special Lagrangian submanifolds and generalized mean curvature vectors. 
In Section \ref{LA}, we compute the Lagrangian angles of Lagrangians constructed in Section \ref{CL} when the ambient is a toric almost Calabi--Yau cone. 
Section \ref{constSLag} is devoted to the proofs of Theorem \ref{54321} and \ref{12345}. 
Section \ref{constLss} is devoted to the proofs of Theorem \ref{11223}. 
In Section \ref{TKC}, for an application of our theorems, we construct some concrete examples in toric Calabi--Yau $3$-folds. \\
 \subsection*{Acknowledgements}
 I would like to thank to A. Futaki for introducing me to the subject of special Lagrangian geometry, 
 for many useful suggestions and discussions concerning Sasakian geometry and for his constant encouragement.   
\section{Toric Sasaki manifold}\label{TS}
In this section we introduce some definitions and propositions in toric Sasaki manifolds. 
Proofs of the results in this section are summarized in the papers of Boyer and Galicki \cite{BoyerGalicki} and Martelli, Sparks and Yau \cite{MartelliSparksYau}. 
First of all, we define Sasaki manifolds. 
\begin{definition}\label{Sasaki}
Let $(S, g)$ be a Riemannian manifold and $\nabla$ the Levi-Civita connection of the Riemannian metric $g$. 
Then $(S,g)$ is said to be a Sasaki manifold if and only if 
it satisfies one of the following two equivalent conditions. 
\begin{itemize}
\item [(\ref{Sasaki}.a)] There exists a Killing vector field $\xi$ of unit length on $S$
so that the tensor field $\Phi$ of type $(1,1)$, defined by
$\Phi(X) ~=~ \nabla_X \xi$, satisfies 
\begin{align*}
(\nabla_X\Phi)(Y) = g(\xi,Y)X - g(X,Y)\xi.
\end{align*}
\item[(\ref{Sasaki}.b)] 
There exists a complex structure $J$ on $C(S)$ compatible with $\overline{g}$ so that 
$(C(S),{\bar g},J)$ becomes a K\"ahler manifold. 
\end{itemize}
\end{definition}
We call the quadruple $(\xi,\eta,\Phi,g)$ on $S$ the Sasaki structure. 
$S$ is often identified with the submanifold $\{r = 1\} =  S  \times \{1\}\subset C(S)$.
By the definition, the dimension of $S$ is odd and denoted by $2m - 1$. 
Hence the complex dimension of $C(S)$ is  $m$. 
Note that $C(S)$ does not contain the apex. 

The equivalence of (\ref{Sasaki}.a) and (\ref{Sasaki}.b) can be seen as follows. 
If $(S,g)$ satisfies the condition (\ref{Sasaki}.a), we can define a complex structure $J$ on $C(C)$ as 
$$JY = \Phi(Y) - \eta(Y) r\frac{\partial}{\partial r}\quad \mathrm{and} \quad J r\frac{\partial}{\partial r} = \xi. $$ 
for all $Y \in \Gamma(TS)$ and $r(\partial/\partial r) \in \Gamma(T\mathbb{R}^{+})$, where $\eta$ is a 1-form on $S$ defined by $\eta(Y)=g(\xi,Y)$. 
Conversely, if $(S,g)$ satisfies condition (\ref{Sasaki}.b), we have a Killing vector field $\xi$ defined as 
$\xi= J \frac{\partial}{\partial r}$. 

We can extend $\xi$ and $\eta$ also on the cone $C(S)$ by putting
$$ \xi = Jr\frac{\partial}{\partial r}, \qquad  \eta(Y) = \frac 1{r^2}\overline{g}(\xi, Y) $$
where $Y$ is any smooth vector field on $C(S)$. 
Of course $\eta$ on $C(S)$ is the pull-back of $\eta$ on $S$ by the projection $\pi:C(S) \rightarrow S$.
Furthermore the $1$-form $\eta$ is expressed on $C(S)$ as 
\begin{align}\label{eta}
\eta = 2d^c\log r
\end{align}
where $d^c=\frac i2 (\bar \partial-\partial)$. 
From (\ref{eta}), the K\"ahler form $\omega$ of the cone $(C(S),\overline{g})$ is expressed as
\begin{align}\label{kahler-form}
\omega=\frac 12 d(r^2\eta) = \frac12 d d^c r^2=\frac{i}{2}\partial\overline{\partial}r^2.
\end{align}
Remember that we have defined $\mathbb{R}^{+}$-action on $C(S)$ in Definition \ref{raction}. 
By (\ref{kahler-form}), it is clear that $\rho^{*}\omega=\rho^2\omega$, 
where we denote the transition map with respect to $\rho\in\mathbb{R}^{+}$ by the same symbol $\rho:C(S)\rightarrow C(S)$; $\rho(p)=\rho\cdot p$. 
Next, we introduce the notion of toric Sasaki manifolds. 
\begin{definition}\label{ToricSasaki}
A Sasaki manifold with Sasaki structure $(S, \xi,\eta,\Phi,g)$ of dimension $2m-1$ is a {\it toric} Sasaki manifold 
if and only if it  satisfies one of the following two equivalent conditions. 
\begin{itemize}
\item[(\ref{ToricSasaki}.a)]There is an effective action of $m$-dimensional torus $T^m$ on $S$ preserving the Sasaki structure.  
\item[(\ref{ToricSasaki}.b)]There is an effective holomorphic action of $m$-dimensional torus $T^m$ on $C(S)$ preserving $\overline{g}$. 
Furthermore two projections $\pi:C(S)\rightarrow S$ and $r:C(S)\rightarrow \mathbb{R}^{+}$ satisfy $\pi(\tau\cdot p)=\tau\cdot \pi(p)$ and $r(\tau\cdot p)=r(p)$ for all $\tau \in T^m$ and $p \in C(S)$. 
\end{itemize}
\end{definition}
It is clear that $\mathbb{R}^{+}$-action and $T^{m}$-action is commutative. 
The most typical example of the toric Sasaki manifold is the sphere $S^{2m-1}$, because $C(S)=\mathbb{C}^m\setminus\{0\}$ is toric K\"ahler. 

The equivalence of (\ref{ToricSasaki}.a) and (\ref{ToricSasaki}.b) can be seen as follows. 
If a Sasaki manifold $(S,g)$ satisfies the condition (\ref{ToricSasaki}.a), let $\tau\in T^m$ act on $C(S)$ as 
\begin{align*}
\tau \cdot p_{0} = (\tau\cdot  s_{0} , r_{0}) 
\end{align*}
for all $p_{0}=(s_{0},r_{0})\in C(S)$. Then this action on $C(S)$ satisfies the condition (\ref{ToricSasaki}.b). 
Conversely, if a Sasaki manifold $(S,g)$ satisfies the condition (\ref{ToricSasaki}.b), 
then the restriction of $T^m$-action to S satisfies the condition (\ref{ToricSasaki}.a). 

Let $\mathfrak{g}\cong\mathbb{R}^m$ be the Lie algebra of $T^m$ and $\mathfrak{g}^{*}$ be the dual vector space. 
We identify the vector field on $C(S)$ generated by $v\in\mathfrak{g}$ and $v$ itself. 
That is, for $p\in C(S)$ we write
$$v(p)=\frac{d}{dt}\bigg|_{t=0}\exp(tv)\cdot p. $$

A toric Sasaki manifold and its cone have a moment map 
$\mu:C(S)\rightarrow \mathfrak{g}^{*}$ with respect to the K\"ahler form $\omega=\frac{1}{2} d(r^2\eta)$. 
It is given by 
\begin{align}\label{momentmap}
\langle \mu(p),v \rangle =\frac{1}{2}r^2(p) \eta(v(p)),
\end{align}
for all $p \in C(S)$ and $v \in \mathfrak{g}$ and it satisfy
$$d\langle \mu ,v\rangle=-\omega(v,\cdot).$$ 

On the other hand, since $C(S)$ is a toric variety, 
there exists a fan $\Sigma$ of $C(S)$ and the complex structure on $C(S)$ is determined by $\Sigma$. 
Moreover there exists an $m$-dimensional complex torus $T^m_{\mathbb{C}}(\cong (\mathbb{C}^{\times})^{m})$ contains $T^{m}$ as a compact subgroup, 
and $T^{m}_{\mathbb{C}}$ acts on $C(S)$ as a bi-holomorphic automorphism and has an open dense $T^{m}_{\mathbb{C}}$-orbit. 
Hence, over $C(S)$, there exists an intrinsic anti-holomorphic involution $\sigma:C(S)\rightarrow C(S)$ determined by $\Sigma$, 
that is, $\sigma^2=id$ and $\sigma_{*} J=-J \sigma_{*}$. 
This involution satisfies 
\begin{align}\label{toric-sigma}
\sigma(w\cdot p)=\overline{w}\cdot \sigma(p), 
\end{align}
where $w\in T^m_{\mathbb{C}}$ and $p\in C(S)$. 
We denote the set of fixed points of $\sigma$ by 
$$C(S)^{\sigma}=\{\, p\in C(S)\mid \sigma(p)=p\,\}. $$ 
Then it is a real $m$-dimensional submanifold of $C(S)$, and we call it a real form of $C(S)$. 
Now we consider some properties of $\sigma$ and $C(S)^{\sigma}$. 

\begin{proposition}\label{anti-symp}
The involution $\sigma:C(S)\rightarrow C(S)$ is anti-symplectic. 
Thus it is also isometry. 
\end{proposition}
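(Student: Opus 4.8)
The plan is to prove the anti-symplectic property $\sigma^{*}\omega=-\omega$ first and then derive the isometry property from it. The starting point is that $\sigma$ is anti-holomorphic: $\sigma^{*}$ sends a form of type $(p,q)$ to one of type $(q,p)$, so on functions $\sigma^{*}\partial=\overline{\partial}\sigma^{*}$ and $\sigma^{*}\overline{\partial}=\partial\sigma^{*}$, hence $\sigma^{*}d^{c}=-d^{c}\sigma^{*}$ (while always $\sigma^{*}d=d\sigma^{*}$). Feeding this into $\omega=\tfrac12 dd^{c}r^{2}$ from (\ref{kahler-form}) gives
\begin{align*}
\sigma^{*}\omega=\tfrac12\,d\bigl(\sigma^{*}(d^{c}r^{2})\bigr)=-\tfrac12\,dd^{c}\bigl((r\circ\sigma)^{2}\bigr),
\end{align*}
so the whole statement is reduced to the single identity $r\circ\sigma=r$.

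To prove $r\circ\sigma=r$ I would exploit the toric structure. Since $r(\tau\cdot p)=r(p)$ for all $\tau\in T^{m}$ and $\sigma(w\cdot p)=\overline{w}\cdot\sigma(p)$, the function $r\circ\sigma$ is $T^{m}$-invariant as well. Now restrict attention to the open dense $T^{m}_{\mathbb{C}}$-orbit $\mathcal{O}$, which is a copy of $T^{m}_{\mathbb{C}}$ on which $T^{m}_{\mathbb{C}}$ acts freely; fix $q_{0}\in\mathcal{O}$ and identify $\mathcal{O}\cong T^{m}_{\mathbb{C}}$ by $w\mapsto w\cdot q_{0}$. As $\sigma$ is a diffeomorphism it preserves the unique open dense orbit, so $\sigma(q_{0})=w_{1}\cdot q_{0}$ for some $w_{1}\in T^{m}_{\mathbb{C}}$, and in these coordinates $\sigma$ is the map $w\mapsto\overline{w}\,w_{1}$. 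From $\sigma^{2}=\mathrm{id}$ and freeness of the action we get $\overline{w_{1}}\,w_{1}=e$, i.e.\ every component of $w_{1}$ has modulus $1$. A $T^{m}$-invariant function on $\mathcal{O}$ depends only on the moduli of the coordinates $w$, and $w\mapsto\overline{w}\,w_{1}$ preserves all of these moduli; therefore $r\circ\sigma=r$ on $\mathcal{O}$, and hence on all of $C(S)$ by density and continuity.

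Plugging $r\circ\sigma=r$ back into the display gives $\sigma^{*}\omega=-\tfrac12 dd^{c}r^{2}=-\omega$, which is the first assertion. For the second, use $\omega(X,Y)=\overline{g}(JX,Y)$ together with $\sigma_{*}J=-J\sigma_{*}$: then $(\sigma^{*}\omega)(X,Y)=\overline{g}(J\sigma_{*}X,\sigma_{*}Y)=-\overline{g}(\sigma_{*}(JX),\sigma_{*}Y)=-(\sigma^{*}\overline{g})(JX,Y)$, and comparing with $\sigma^{*}\omega=-\omega$ yields $(\sigma^{*}\overline{g})(JX,Y)=\overline{g}(JX,Y)$ for all $X,Y$; since $J$ is invertible this means $\sigma^{*}\overline{g}=\overline{g}$.

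The main obstacle is the identity $r\circ\sigma=r$. One is tempted to argue intrinsically without invoking the open orbit: from $\sigma(\tau\cdot p)=\tau^{-1}\cdot\sigma(p)$ one gets $\sigma_{*}\xi=-\xi$ for the Reeb field (which lies in $\mathfrak{g}$), and combined with $\xi=Jr\tfrac{\partial}{\partial r}$ and anti-holomorphicity this gives $\sigma_{*}\bigl(r\tfrac{\partial}{\partial r}\bigr)=r\tfrac{\partial}{\partial r}$, so $\sigma$ commutes with the $\mathbb{R}^{+}$-action of Definition \ref{raction}. However, this only forces $r\circ\sigma=(h\circ\pi)\,r$ for some positive function $h$ on $S$, and removing this ambiguity genuinely needs the description of $\sigma$ on the open $T^{m}_{\mathbb{C}}$-orbit (equivalently, the fact that $r^{2}$ is the $T^{m}$-invariant K\"ahler potential singled out by the toric data). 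This is why I would build the argument around the complex torus action rather than around the Reeb field alone.
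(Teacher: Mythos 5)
Your proof is correct, but it takes a genuinely different route from the paper's. The paper also works on the open dense $T^{m}_{\mathbb{C}}$-orbit, but its key input is Guillemin's normal form: in logarithmic coordinates $z^{k}=x^{k}+iy^{k}$ the $T^{m}$-invariant K\"ahler form has a potential $F(x)$ depending only on the real parts, and since $\sigma$ is the standard conjugation $z\mapsto\overline{z}$ there, $\sigma^{*}\omega=-\omega$ follows by direct computation with $\sigma^{*}dz^{k}=d\overline{z}^{k}$ and the symmetry of $\partial^{2}F/\partial x^{k}\partial x^{\ell}$; density then finishes the argument. You instead use the globally defined potential $r^{2}$ from (\ref{kahler-form}), reduce everything to the single identity $r\circ\sigma=r$ via $\sigma^{*}d^{c}=-d^{c}\sigma^{*}$, and prove that identity from the $T^{m}$-invariance of $r$ in Definition \ref{ToricSasaki} together with the description $w\mapsto\overline{w}\,w_{1}$ of $\sigma$ on the open orbit, where $|w_{1}^{k}|=1$ is forced by $\sigma^{2}=\mathrm{id}$ and freeness. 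Each step checks out: the sign rule for $d^{c}$ under an anti-holomorphic map is right, $r\circ\sigma$ is indeed $T^{m}$-invariant, and a $T^{m}$-invariant function on the orbit depends only on the moduli $|w^{k}|$, which $w\mapsto\overline{w}\,w_{1}$ preserves. Your route avoids quoting Guillemin's theorem and has the side benefit of establishing $r\circ\sigma=r$ explicitly, a fact the paper uses only implicitly later (e.g.\ when asserting $C(S)^{\sigma}=C(S^{\sigma})$ is itself a cone); the paper's route is shorter if one is willing to cite the invariant potential. Your isometry step is the same one-line argument the paper gestures at, and your closing caveat---that the Reeb-field argument alone only determines $r\circ\sigma$ up to a positive factor pulled back from $S$---is a fair diagnosis of why the torus-orbit description is genuinely needed.
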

\begin{proof}
Let $U_{0}$ be an open dense $T^{m}_{\mathbb{C}}$-orbit. 
For $(w^{1},\dots,w^{m})\in U_{0}\cong T^{m}_{\mathbb{C}}\cong(\mathbb{C}^{\times})^m$, 
we take a logarithmic holomorphic coordinates $(z^{1},\dots,z^{m})$ defined by $e^{z^k}=w^k$. 
Since $\omega$ is $T^{m}$-invariant and the action of $T^{m}$ is Hamiltonian, 
there exists a function $F(x)\in C^{\infty}(\mathbb{R}^m)$ with the property
\begin{align*}
\omega=\frac{i}{2}\sum_{k,\ell=1}^{m} \frac{\partial^{2} F}{\partial x^{k} \partial x^{\ell}}dz^{k}\wedge d\overline{z}^{\ell}\quad\mathrm{on}\ U_{0},
\end{align*}
where $z^{k}=x^{k}+iy^{k}$. 
(See Guillemin \cite{Guillemin}.)
On $U_{0}$, the involution $\sigma$ coincides with the standard complex conjugate $\sigma(z)=\overline{z}$, 
where $\overline{z}=(\overline{z}^{1},\dots,\overline{z}^{m})$. 
Note that $F$ is independent of the coordinates $(y^{k})_{k=1}^{m}$. 
Thus we have $\sigma^{*}\omega=-\omega$ on $U_{0}$. 
Since $U_{0}$ is open and dense in $C(S)$, thus we have $\sigma^{*}\omega=-\omega$ on $C(S)$. 
Second statement follows immediately by combining the property that $\sigma$ is anti-holomorphic. 
\end{proof}
Here we have some remarks. 
\begin{remark}\label{abcd}
Take a point $p$ in real form $C(S)^{\sigma}$ and two vectors $X,Y$ in $T_{p}C(S)^{\sigma}$. 
Since $\sigma_{*}X=X$ and $\sigma_{*}Y=Y$,  
we have 
$$\omega(X,Y)=\omega(\sigma_{*}X,\sigma_{*}Y)=-\omega(X,Y)$$ 
by Proposition \ref{anti-symp}, hence $\omega=0$ on $C(S)^{\sigma}$. 
This means that the real form $C(S)^{\sigma}$ is a Lagrangian submanifold in $C(S)$. 
Moreover if we apply the condition (\ref{toric-sigma}) for $p$ and $\tau\in T^m$, 
we have $\sigma(\tau \cdot p)=\tau^{-1}\cdot p$, 
hence for all $v \in\mathfrak{g}$ we have $\sigma_{*}v(p)=-v(p)$. 
This means that $v(p)$ is orthogonal to $T_{p}C(S)^{\sigma}$ with respect to $\overline{g}$. 
\end{remark}

In general we do not know for $p$ in $C(S)^{\sigma}$ whether its position vector $\overrightarrow{p}$ is tangent to $C(S)^{\sigma}$. 
However if we assume the Reeb field $\xi$ is generated by an element in $\mathfrak{g}$, then it is ensured. 
For such a toric Sasaki manifold, we identify the Reeb vector field $\xi$ and an element in $\mathfrak{g}$ that generates $\xi$. 
\begin{proposition}\label{real-form}
Let $(S, \xi,\eta,\Phi,g)$ be a toric Sasaki manifold. 
If the Reeb field $\xi$ is generated by an element in $\mathfrak{g}$, 
then for all $p$ in $C(S)^{\sigma}$ its position vector $\overrightarrow{p}$ is tangent to $C(S)^{\sigma}$. 
\end{proposition}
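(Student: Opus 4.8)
The plan is to reduce the statement to the single identity $\sigma_{*}\overrightarrow{p}=\overrightarrow{p}$ for every $p\in C(S)^{\sigma}$. By Proposition \ref{anti-symp}, $\sigma$ is an isometric involution, and it fixes $p$; hence $C(S)^{\sigma}$ is a (totally geodesic) submanifold whose tangent space $T_{p}C(S)^{\sigma}$ is exactly the $(+1)$-eigenspace of the linear involution $\sigma_{*}\colon T_{p}C(S)\to T_{p}C(S)$. Therefore $\overrightarrow{p}$ is tangent to $C(S)^{\sigma}$ if and only if it is fixed by $\sigma_{*}$, and it suffices to verify this.

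To do so, I would first rewrite the position vector in terms of the Reeb field. Since $\xi=Jr\frac{\partial}{\partial r}$ on $C(S)$ and $J^{2}=-\mathrm{id}$, one has $r\frac{\partial}{\partial r}=-J\xi$, so that $\overrightarrow{p}=-J\xi(p)$. Now invoke the hypothesis: the Reeb field $\xi$ is generated by some $v\in\mathfrak{g}$, so $\xi(p)=v(p)$. By Remark \ref{abcd}, for $p\in C(S)^{\sigma}$ and any $v\in\mathfrak{g}$ we have $\sigma_{*}v(p)=-v(p)$, and applying this to the generator of $\xi$ gives $\sigma_{*}\xi(p)=-\xi(p)$. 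Combining this with the anti-holomorphicity $\sigma_{*}J=-J\sigma_{*}$ yields
\[
\sigma_{*}\overrightarrow{p}=-\sigma_{*}\bigl(J\xi(p)\bigr)=J\bigl(\sigma_{*}\xi(p)\bigr)=-J\xi(p)=\overrightarrow{p},
\]
which is what we wanted.

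The only substantive point — and the only place where the hypothesis on $\xi$ is used — is the identity $\sigma_{*}\xi(p)=-\xi(p)$, which rests on Remark \ref{abcd}; without assuming that $\xi$ comes from $\mathfrak{g}$ the Reeb vector at $p$ need not be anti-invariant under $\sigma_{*}$, consistent with the remark preceding the proposition that in general $\overrightarrow{p}$ need not be tangent to $C(S)^{\sigma}$. Everything else is a short formal manipulation, so I do not expect any real obstacle beyond making the eigenspace description of $T_{p}C(S)^{\sigma}$ precise.
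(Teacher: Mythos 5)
Your proof is correct, but it runs along a genuinely different track from the paper's. The paper argues metrically: since $C(S)^{\sigma}$ is Lagrangian, $T_{p}C(S)=T_{p}C(S)^{\sigma}\oplus J(T_{p}C(S)^{\sigma})$ orthogonally; Remark \ref{abcd} puts $\xi(p)$ in the orthogonal complement of $T_{p}C(S)^{\sigma}$, hence in $J(T_{p}C(S)^{\sigma})$, and then $\overrightarrow{p}=-J\xi(p)$ lands back in $T_{p}C(S)^{\sigma}$. You instead work purely with the involution, using the other half of Remark \ref{abcd} (the anti-invariance $\sigma_{*}v(p)=-v(p)$ rather than the orthogonality it implies) together with $\sigma_{*}J=-J\sigma_{*}$ to show $\sigma_{*}\overrightarrow{p}=\overrightarrow{p}$; your sign chain is correct. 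The one point you should not leave implicit is the nontrivial inclusion in your eigenspace claim, namely that every $\sigma_{*}$-fixed vector at $p$ is actually tangent to $C(S)^{\sigma}$: this follows either from the standard fact that the fixed-point set of an isometric involution (Proposition \ref{anti-symp}) is totally geodesic with tangent space the $(+1)$-eigenspace, or more cheaply by a dimension count, since $\sigma_{*}J=-J\sigma_{*}$ forces $J$ to interchange the $(\pm 1)$-eigenspaces of $\sigma_{*}$, so each has dimension $m$, while $T_{p}C(S)^{\sigma}$ is an $m$-dimensional subspace of the $(+1)$-eigenspace. With that filled in, your argument is a complete and slightly more "structural" alternative; the paper's version buys brevity by exploiting the Lagrangian splitting already recorded in Remark \ref{abcd}.
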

\begin{proof}
Remember Remark \ref{abcd}. 
Since $C(S)^{\sigma}$ is a Lagrangian submanifold, we have orthogonal decomposition 
$$T_{p}C(S)=T_{p}C(S)^{\sigma}\oplus J(T_{p}C(S)^{\sigma}), $$
with respect to $\overline{g}$. 
Now $\xi$ is in $\mathfrak{g}$, hence $\xi(p)$ is orthogonal to $T_{p}C(S)^{\sigma}$, that is, $\xi(p)$ is in $J(T_{p}C(S)^{\sigma})$. 
On the other hand, $\xi(p) = J(r\frac{\partial}{\partial r})|_{\mathrm{at}\, p}=J(\overrightarrow{p})$. 
Thus we have $\overrightarrow{p} \in T_{p}C(S)^{\sigma}$. 
\end{proof}
In our paper we always assume that the Reeb field $\xi$ of toric Sasaki manifold is generated by an element in $\mathfrak{g}$. 
By Proposition \ref{real-form}, it follows that $C(S)^{\sigma}$ is also a cone manifold. 
If we write $S^{\sigma}=\{\, p\in S \mid \sigma(p)=p \,\}$, then $C(S)^{\sigma}=C(S^{\sigma})$. 

In the last of this section, we remark some facts that is well known in the toric contact geometry and the algebraic toric geometry. 
Let $C(S)$ be the cone of a toric Sasaki manifold $S$ with dimension $2m-1$ and with the Reeb field $\xi$. 
Let $\mathbb{Z}_{\mathfrak g}\cong \mathbb{Z}^m$ be the integral lattice of $\mathfrak g$, that is the kernel of the exponential map $\exp : \mathfrak g \to T^m$. 
Let $\Sigma$ be a fan of $C(S)$ and $\Lambda=\{\lambda_{1},\dots,\lambda_{d}\}\subset\mathbb{Z}_{\mathfrak{g}}$ be the primitive generators of 
the $1$-dimensional cones of $\Sigma$.  
Let $\Delta=\mu(C(S))$ be a moment image of $C(S)$ and let $\Delta_{0}^{*}$ be a (open) dual cone of $\Delta$ defined by
$$\Delta_{0}^{*}:=\{\, x \in \mathfrak{g} \mid \langle y, x \rangle >0\ \mathrm{for\ all}\ y \in \Delta \,\}. $$
\begin{remark}\label{remarkSasaki}
In fact, $\Delta$ is a {\it good} rational polyhedral cone defined below and the Reeb field $\xi$ is an element of $\Delta_{0}^{*}$. 
\end{remark}
The second statement in Remark \ref{remarkSasaki} is clear since for all $p$ in $C(S)$ we have
$$\langle \mu(p), \xi \rangle =\frac{1}{2}r^2(p) \eta(\xi(p))=\frac{1}{2}r^2(p)>0. $$

\begin{definition}[Good cone, cf. \cite{Lerman}] \label{good}
First we say that a subset $\Delta \subset \mathfrak{g}^{*}$ is a {\it rational polyhedral cone} 
if there exists a finite set of primitive vectors $\Lambda=\{\lambda_{1},\dots,\lambda_{d}\}\subset \mathbb{Z}_{\mathfrak{g}}$ such that
$$ \Delta = \{\,y \in \mathfrak{g}^{*} \mid  \langle y, \lambda \rangle \geq 0\, \mathrm{for\ }\,  \lambda\in\Lambda\, \}-\{0\}.$$
We assume that the set $\Lambda$ is minimal, that is, we can not express $\Delta$ by any subset $\Lambda'\subset\Lambda$, $\Lambda'\neq\Lambda$. 
Furthermore we say that $\Delta$ is {\it strongly convex} if $\Delta\cup\{0\}$ does not contain any straight lines of the form $\ell=\{\, p+vt \mid t\in\mathbb{R}\, \}$ for some $p$ and $v$ in $\mathfrak{g}^{*}$. 
Under these assumptions a strongly convex rational polyhedral cone $\Delta$ with non-empty interior is {\it good} if the following condition holds. 
If a subset $\Lambda'\subset\Lambda$ satisfies 
$$ \{\, y \in \Delta \mid \langle y, \lambda \rangle = 0\ \mathrm{for}\ \lambda\in\Lambda' \,\}\neq\emptyset, $$
then $\Lambda'$ is linearly independent over $\mathbb{Z}$ and 
\begin{equation}\label{goodcondition}
\biggl\{\, \sum_{\lambda\in\Lambda'}a_{\lambda}\lambda\,  \bigg|\, \ a_{\lambda} \in \mathbb{R}\, \biggr\} \cap \mathbb{Z}_{\mathfrak{g}} = 
\biggl\{\, \sum_{\lambda\in\Lambda'}m_{\lambda} \lambda \, \bigg|\, m_{\lambda} \in \mathbb{Z}\, \biggr\}.
\end{equation}
\end{definition}

By the standard algebraic toric geometry theory, 
we know that the canonical line bundle $K_{C(S)}$ of $C(S)$ is trivial or not. 
That is the following remark. 
\begin{remark}\label{trivial}
The canonical line bundle $K_{C(S)}$ of $C(S)$ is trivial if and only if 
there exists an element $\gamma\in(\mathbb{Z}_{\mathfrak{g}})^{*}\cong\mathbb{Z}^{m}$ such that 
$$\langle \gamma, \lambda \rangle = 1$$
for all $\lambda\in\Lambda$. 
In fact, by using this element $\gamma=(\gamma_{1},\dots,\gamma_{m})$, we can construct canonical non-vanishing holomorphic $(m,0)$-form on $C(S)$ 
by purely algebraic toric geometry way, and we denote it by $\Omega_{\gamma}$. 
On the open dense $T_{\mathbb{C}}^{m}$-orbit $U_{0}\cong(\mathbb{C}^{\times})^{m}$, 
we can express $\Omega_{\gamma}$ by the logarithmic holomorphic coordinates $(z^{k})_{k=1}^{m}$ by 
$$\Omega_{\gamma}=\exp(\gamma_{1}z^{1}+\dots+\gamma_{m}z^{m})dz^{1}\wedge\dots\wedge dz^{m}. $$
\end{remark}
\section{Construction of Lagrangian submanifolds}\label{CL}
Let $(S,g)$ be a toric Sasaki manifold with $\dim_{\mathbb{R}}S=2m-1$ and $(C(S),\overline{g})$ be the toric K\"ahler cone. 
In this section we construct the explicit examples of Lagrangian submanifolds in $C(S)$. 
Let $\mu:C(S)\rightarrow\mathfrak{g}^{*}$ be a moment map and $\Delta=\mu(C(S))$ be the moment image of $C(S)$. 
As explained in Section \ref{TS}, there exists a finite set of primitive vectors $\Lambda=\{\lambda_{1},\dots,\lambda_{d}\}\subset \mathbb{Z}_{\mathfrak g}$ such that
$$ \Delta = \{\, y \in \mathfrak{g}^{*} \mid \langle y, \lambda \rangle \geq 0\, \mathrm{for}\, \lambda\in\Lambda\,\}-\{0\}. $$

To construct Lagrangian submanifolds, first of all, take $\zeta\in\mathfrak{g}$ and $c\in\mathbb{R}$, and
we denote the hyperplane $\{\, y \in\mathfrak{g}^{*}  \mid \langle y, \zeta\rangle=c \,\}$ by $H_{\zeta,c}$. 
We assume that 
\begin{align}
& \mathrm{Int}\Delta\cap H_{\zeta,c}\neq\emptyset \quad \mathrm{and} \label{ass1}\\
& \zeta\notin\mathfrak{z}_{y}\quad \mathrm{for\ any\ }y\in\Delta\cap H_{\zeta,c}, \label{ass2}
\end{align}
where we define $\mathfrak{z}_{y}$ for $y\in \Delta$ by 
$$\mathfrak{z}_{y}=\mathrm{Span}_{\mathbb{R}}\{\,  \lambda_{i}  \mid  \langle y, \lambda_{i} \rangle=0  \}. $$
For example, if $y \in \mathrm{Int}\Delta$ then $\mathfrak{z}_{y}=\{0\}$. 
We denote the intersection of $\Delta$ and $H_{\zeta,c}$ by 
$$\Delta_{\zeta,c}=\Delta\cap H_{\zeta,c}.$$ 
First assumption (\ref{ass1}) 
means that $\Delta_{\zeta,c}$ is codimension one in $\Delta$. 
Second assumption (\ref{ass2}) means that if $p\in C(S)$ is in $ \mu^{-1}(\Delta_{\zeta,c})$ then $\zeta(p)\neq0$, 
where we identify $\zeta\in\mathfrak{g}$ and the vector field on $C(S)$ generated by $\zeta\in\mathfrak{g}$. 

Let $\sigma:C(S)\rightarrow C(S)$ be the involution explained in Section \ref{TS} and $C(S)^{\sigma}$ be the real form. 
Let $\mu^{\sigma}:C(S)^{\sigma}\rightarrow \Delta$ be the restriction of $\mu$ on the real form. 
In fact, $\mu^{\sigma}$ is a $2^{m}$-fold ramified covering of $\Delta$. 
We define a subset of $C(S)^{\sigma}$ as the pull-back of $\Delta_{\zeta,c}$ by $\mu^{\sigma}$ by
\begin{align*}
C(S)^{\sigma}_{\zeta,c}& =(\mu^{\sigma})^{-1}(\Delta_{\zeta,c})\\
& =\{\, p\in C(S)^{\sigma} \mid \langle\mu(p), \zeta \rangle=c \,\}. 
\end{align*}
By the assumptions (\ref{ass1}) and (\ref{ass2}), in fact $C(S)^{\sigma}_{\zeta,c}$ is a real $(m-1)$-dimensional submanifold in the real form $C(S)^{\sigma}$. 
Since $\mu^{\sigma}$ is a $2^{m}$-fold covering of $\Delta$, $C(S)^{\sigma}_{\zeta,c}$ is a $2^m$-fold covering of $\Delta_{\zeta,c}$. 

\begin{remark}
If $\zeta$ and $c$ do not satisfy  the assumptions (\ref{ass1}) and (\ref{ass2}), then $C(S)^{\sigma}_{\zeta,c}$ may become a singular submanifold. 
\end{remark}

To construct a Lagrangian submanifold, we move $C(S)^{\sigma}_{\zeta,c}$ by a one parameter action of $\mathbb{R}^{+}$ and torus $T^{m}$. 
Take an open interval $I \subset \mathbb{R}$. 
Let $f:I\rightarrow \mathbb{R}$ and $\rho:I \rightarrow \mathbb{R}^{+}$ be two functions on $I$, and $\tau_{0}$ be an element of torus $T^m$. 
We assume that $\dot{f}$ is non-vanishing on $I$. 
We denote the $1$-parameter orbit $\{\exp(f(t)\zeta)\cdot \tau_{0}\}_{t\in I}$ in torus by $\{\tau(t)\}_{t\in I}$. 
We define a real $m$-dimensional manifold by 
$$L_{\zeta,c}=C(S)^{\sigma}_{\zeta,c}\times I. $$ 
\begin{definition}
We define a map $F:L_{\zeta,c}\rightarrow C(S)$ by 
$$F(p,t):=\rho(t)\cdot \tau(t)\cdot p$$
for $(p,t)\in C(S)^{\sigma}_{\zeta,c}\times I =L_{\zeta,c}.$ 
\end{definition}

\begin{remark}\label{S1}
If  $\rho(t)\cdot \tau(t)$ is defined on $I=\mathbb{R}$ and periodic, then we can reduce $I$ to $S^{1}$ and take $L_{\zeta,c}$ as $C(S)^{\sigma}_{\zeta,c}\times S^{1}$.   
\end{remark}

\begin{theorem}\label{Lag}
$F:L_{\zeta,c} \rightarrow C(S)$ is a Lagrangian submanifold in $C(S)$. 
\end{theorem}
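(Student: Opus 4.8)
The goal is to show that $F:L_{\zeta,c}\to C(S)$ is a Lagrangian immersion, i.e.\ that $F$ is an immersion of a manifold of dimension $m=\dim_{\mathbb C}C(S)$ and that $F^{*}\omega=0$. Since $\dim L_{\zeta,c}=(m-1)+1=m$ is already correct, the two things to verify are (i) $F$ is an immersion, and (ii) the pull-back of the K\"ahler form vanishes. The plan is to work pointwise: fix $(p,t)\in C(S)^{\sigma}_{\zeta,c}\times I$ and compute the image $F_{*}(T_{(p,t)}L_{\zeta,c})$ inside $T_{F(p,t)}C(S)$ using the splitting of $TL_{\zeta,c}$ into the $C(S)^{\sigma}_{\zeta,c}$-directions and the $I$-direction.

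\textbf{Step 1: Reduce to the point $p$ via equivariance.} Because the $\mathbb R^{+}$-action and $T^{m}$-action are holomorphic (for $T^{m}_{\mathbb C}$) and satisfy $\rho^{*}\omega=\rho^{2}\omega$ and $\tau^{*}\omega=\omega$, pulling back $\omega$ by the map $q\mapsto\rho(t)\cdot\tau(t)\cdot q$ just multiplies $\omega$ by $\rho(t)^{2}$; hence it suffices to understand the relevant subspaces back at $p$ (and at $t$), up to this conformal factor. Concretely, write $F(p,t)=\Psi_{t}(p)$ with $\Psi_{t}=\rho(t)\cdot\tau(t)$. Then for $v\in T_{p}C(S)^{\sigma}_{\zeta,c}$ we have $F_{*}v=(\Psi_{t})_{*}v$, and $\partial F/\partial t=(\Psi_{t})_{*}$ applied to nothing — rather, differentiating the orbit, $\partial F/\partial t$ at $(p,t)$ equals $(\Psi_{t})_{*}\big(\dot\rho(t)\rho(t)^{-1}\overrightarrow{p}+\dot f(t)\,\zeta(p)\big)$, using that $\tfrac{d}{dt}\rho(t)\cdot q=\dot\rho\,\partial_{r}$-type term $=\dot\rho\rho^{-1}\overrightarrow{\rho\cdot q}$ and $\tfrac{d}{dt}\tau(t)\cdot q=\dot f(t)\,\zeta(\tau(t)\cdot q)$. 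So modulo the isomorphism $(\Psi_{t})_{*}$ and the scaling $\rho(t)^{2}$ of $\omega$, everything reduces to the subspace
\begin{align*}
W=T_{p}C(S)^{\sigma}_{\zeta,c}+\mathbb R\cdot\big(a\,\overrightarrow{p}+b\,\zeta(p)\big)\subset T_{p}C(S),
\end{align*}
where $(a,b)=(\dot\rho/\rho,\dot f)$ with $b\neq0$ by the non-vanishing assumption on $\dot f$.

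\textbf{Step 2: The immersion property.} The restriction of $(\Psi_t)_*$ is a linear isomorphism, so it is enough to see $\dim W=m$, i.e.\ that the added vector $a\,\overrightarrow{p}+b\,\zeta(p)$ is not already in $T_{p}C(S)^{\sigma}_{\zeta,c}$. Now $\overrightarrow{p}\in T_{p}C(S)^{\sigma}$ by Proposition~\ref{real-form} (using the standing assumption that $\xi$ is generated by an element of $\mathfrak g$); moreover, since $\overrightarrow{p}=r\partial_r$ is the radial vector, $\langle d\langle\mu,\zeta\rangle,\overrightarrow{p}\rangle=\overrightarrow{p}\big(\tfrac12 r^{2}\eta(\zeta)\big)=r^{2}\eta(\zeta)=2\langle\mu(p),\zeta\rangle=2c$, and the tangent space $T_{p}C(S)^{\sigma}_{\zeta,c}$ is cut out inside $T_{p}C(S)^{\sigma}$ by $d\langle\mu,\zeta\rangle=0$; one checks $2c\ne 0$ is not needed — rather the point is that $\zeta(p)\notin T_{p}C(S)^{\sigma}$ at all, by Remark~\ref{abcd} (every $v\in\mathfrak g$ satisfies $\sigma_{*}v(p)=-v(p)$, so $\zeta(p)\perp T_pC(S)^{\sigma}$), and $\zeta(p)\ne 0$ by assumption~(\ref{ass2}). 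Hence $a\,\overrightarrow p+b\,\zeta(p)$ has nonzero component $b\,\zeta(p)$ in $J(T_pC(S)^{\sigma})$, which is transverse to $T_pC(S)^{\sigma}\supset T_{p}C(S)^{\sigma}_{\zeta,c}$; so $\dim W=m$ and $F$ is an immersion.

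\textbf{Step 3: Vanishing of $F^{*}\omega$.} By Step~1 we must check $\omega|_{W}=0$. On pairs of vectors from $T_{p}C(S)^{\sigma}_{\zeta,c}\subset T_{p}C(S)^{\sigma}$ this is immediate from Remark~\ref{abcd}: $C(S)^{\sigma}$ is Lagrangian, so $\omega$ vanishes on any two of its tangent vectors. It remains to pair a vector $v\in T_{p}C(S)^{\sigma}_{\zeta,c}$ with $a\,\overrightarrow p+b\,\zeta(p)$. For the $\overrightarrow p$ part: $\overrightarrow p\in T_{p}C(S)^{\sigma}$ and $v\in T_pC(S)^{\sigma}$, so $\omega(\overrightarrow p,v)=0$ again by Lagrangianness of the real form. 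For the $\zeta(p)$ part: use the moment-map identity $d\langle\mu,\zeta\rangle=-\omega(\zeta,\cdot)$, so $\omega(\zeta(p),v)=-d\langle\mu,\zeta\rangle(v)=-v\big(\langle\mu,\zeta\rangle\big)$; but $v$ is tangent to the level set $\{\langle\mu,\zeta\rangle=c\}$ by the very definition of $C(S)^{\sigma}_{\zeta,c}$, so this derivative is $0$. Therefore $\omega(v,\,a\overrightarrow p+b\zeta(p))=0$ for all $v\in T_{p}C(S)^{\sigma}_{\zeta,c}$, and combined with the previous case $\omega|_{W}=0$. Pulling back through $(\Psi_t)_*$ and multiplying by $\rho(t)^{2}$ gives $F^{*}\omega=0$, completing the proof.

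\textbf{Main obstacle.} The routine-but-delicate point is Step~1: carefully differentiating the two-parameter family $\rho(t)\cdot\tau(t)\cdot p$ and correctly identifying $\partial F/\partial t$ as $(\Psi_t)_*$ of a combination of $\overrightarrow p$ and $\zeta(p)$, together with the bookkeeping that $\Psi_t$ scales $\omega$ by $\rho(t)^2$ and maps the real form to itself so that Remark~\ref{abcd} can be applied at $p$ rather than at $F(p,t)$. Once that reduction is set up cleanly, Steps~2 and~3 are short applications of Proposition~\ref{real-form}, Remark~\ref{abcd}, and the moment-map identity $d\langle\mu,\zeta\rangle=-\omega(\zeta,\cdot)$, with assumptions~(\ref{ass1}) and~(\ref{ass2}) entering exactly to guarantee $\zeta(p)\ne0$ and that the level set is a genuine submanifold of the right dimension.
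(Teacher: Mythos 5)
Your proposal is correct and follows essentially the same route as the paper: the same decomposition of $F_{*}$ into the $(\rho(t)\cdot\tau(t))_{*}$-image of $T_{p}C(S)^{\sigma}_{\zeta,c}$ and of $\frac{\dot\rho}{\rho}\overrightarrow{p}+\dot f\,\zeta(p)$, immersion via $\zeta(p)\neq 0$ being orthogonal to the real form, and $F^{*}\omega=0$ via the Lagrangianness of $C(S)^{\sigma}$, the tangency of $\overrightarrow{p}$ to it, and $\omega(\zeta(p),v)=\pm v(\langle\mu,\zeta\rangle)=0$ on the level set. The brief digression in Step~2 about $\overrightarrow{p}(\langle\mu,\zeta\rangle)=2c$ is harmless and unused.
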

\begin{proof}
Fix $x_{0}=(p_{0},t_{0}) \in L_{\zeta,c}$. 
For any $X\in T_{p_{0}}C(S)^{\sigma}_{\zeta,c}$, we have
\begin{align}\label{FX}
F_{*}X=(\rho(t_{0})\cdot \tau(t_{0}))_{*}X
\end{align}
and for $\partial/\partial t \in T_{t_{0}}I$ we have
\begin{align}\label{FT}
F_{*}\frac{\partial}{\partial t}=(\rho(t_{0})\cdot \tau(t_{0}))_{*}\biggl(\frac{\dot{\rho}(t_{0})}{\rho(t_{0})}\overrightarrow{p_{0}}+\dot{f}(t_{0})\zeta(p_{0})\biggr). 
\end{align}
By the assumption, $\dot{f}(t_{0})\zeta(p_{0})\neq0$ and it is orthogonal to all tangent vectors on $C(S)^{\sigma}$, 
it follows that $F$ is an immersion. 
Next, it is clear that 
\begin{align*}
& \omega(F_{*}X,F_{*}Y)=\rho^{2}(t_{0})\omega(X,Y)=0, \\
& \omega(F_{*}\partial/\partial t,F_{*}\partial/\partial t)=0\quad and \\
& \omega(F_{*}\partial/\partial t, F_{*}X)=\rho^2(t_{0})\dot{f}(t_{0})\omega(\zeta(p_{0}),X). 
\end{align*}
As mentioned in Remark \ref{abcd}, if two vectors $X$ and $Y$ are tangent to the real form then $\omega(X,Y)=0$
 and note that position vector $\overrightarrow{p_{0}}$ is tangent to the real form. 
Finally, in fact $\omega(\zeta(p_{0}),X)=0$ since 
 $$\omega(\zeta(p_{0}),X)=X(\langle \mu, \zeta\rangle)$$
and by definition of $C(S)^{\sigma}_{\zeta,c}$ the function $\langle \mu, \zeta\rangle$ is a constant $c$ on $C(S)^{\sigma}_{\zeta,c}$. 
Thus we have $F^{*}\omega=0$ and $F$ is a Lagrangian immersion. 
\end{proof}
\section{almost Calabi--Yau manifold}\label{aCY}
In this section, we recall the details about almost Calabi--Yau manifolds, special Lagrangian submanifolds and so on. 
\begin{definition}
Let $(M,\omega)$ be a K\"ahler manifold with complex dimension $m$. If the canonical line bundle $K_{M}$ is trivial, we can take a non-vanishing holomorphic $(m,0)$-form $\Omega$ on $M$. 
Then we call a triple $(M,\omega,\Omega)$ an almost Calabi--Yau manifold. 
Furthermore if the function $\psi:M\rightarrow\mathbb{R}$ defined below is identically constant, we call it a Calabi--Yau manifold. 
\end{definition}

On an almost Calabi--Yau manifold $(M,\omega,\Omega)$, we define a function $\psi$ by
\begin{align*}
e^{2m\psi}\frac{\omega^m}{m!}=(-1)^{\frac{m(m-1)}{2}}\left(\frac{i}{2}\right)^m\Omega\wedge\bar{\Omega}.
\end{align*}

In this section, we always assume that $(M,\omega,\Omega)$ is an almost Calabi--Yau manifold with complex dimension $m$. 
Next, we define the Lagrangian angle of a Lagrangian submanifold. 
\begin{definition}
Let $F:L\rightarrow M$ be a Lagrangian submanifold. 
The Lagrangian angle of $F$ is the map $\theta_{F}:L\rightarrow\mathbb{R}/\pi\mathbb{Z}$ defined by
\begin{equation*}
F^*(\Omega)=e^{i\theta_{F}+mF^*(\psi)}\mathrm dV_{F^*(g)},
\end{equation*}
where $g$ is the Riemannian metric on $M$ with respect to $\omega$. 
\end{definition}
Note that we do not assume that $L$ is oriented. 
Thus $dV_{F^*(g)}$ has ambiguity of the sign. 
Since $F:L\rightarrow M$ is a Lagrangian submanifold, $\theta_{F}$ is well defined. 
For details, see for example Harvey and Lawson \cite[III.1]{HarveyLawson} or Behrndt \cite{Behrndt2}. 
\begin{remark}\label{angle}
Note that $F^{*}\Omega$ is a non-vanishing {\it complex-valued} $m$-form on $L$. 
Hence on each local coordinates $(U, x^{1},\dots,x^{m})$ we can express $F^{*}\Omega$ as 
$$F^{*}\Omega=h(x)dx^{1}\wedge\dots\wedge dx^{m}. $$
Here $h$ is a non-vanishing complex-valued function on $U$. 
Then the Lagrangian angle $\theta_{F}$ is exactly $\arg h$ the argument of $h$. 
\end{remark}

Now we can define special Lagrangian submanifolds. 
\begin{definition}\label{SLag}
Take a constant $\theta \in \mathbb{R}$. 
We say that $F:L\rightarrow M$ is a special Lagrangian submanifold with phase $e^{i\theta}$ if 
the Lagrangian angle $\theta_{F}$ is identically constant $\theta$. 
This condition is equivalent to that
\begin{align*}
F^{*}(\mathrm{Im}(e^{-i\theta}\Omega))=F^{*}(\cos\theta\,\mathrm{Im}\,\Omega-\sin\theta\,\mathrm{Re}\,\Omega)=0. 
\end{align*}
\end{definition}
If $F:L\rightarrow M$ is a special Lagrangian submanifold with phase $e^{i\theta}$, 
then there is a unique orientation on $L$ in which $F^{*}(\mathrm{Re}(e^{-i\theta}\Omega))=F^*(\cos\theta\,\mathrm{Re}\,\Omega+\sin\theta\,\mathrm{Im}\,\Omega)$ is positive.

Historically Harvey and Lawson \cite{HarveyLawson} have defined special Lagrangian submanifolds by calibrations. 
Of course we can define special Lagrangian submanifolds in almost Calabi--Yau manifolds by calibrations as follows. 
Let $g$ be a Riemannian metric with respect to $\omega$. 
Here we define a new Riemannian metric $\tilde{g}$ on $M$ by conformally rescaling by $\tilde{g}=e^{2\psi}g$. 
Then the $m$-form $\mathrm{Re}(e^{-i\theta}\Omega)$ becomes a calibration on the Riemannian manifold $(M,\tilde{g})$ 
and the definition of special Lagrangian submanifolds in $(M,\omega,\Omega)$ is restated as 
a calibrated submanifold in the Riemannian manifold $(M,\tilde{g})$ with respect to $\mathrm{Re}(e^{-i\theta}\Omega)$. 
 
 Here we introduce the generalized mean curvature vector field. 
The generalized mean curvature vector field was introduced by Behrndt in \cite[\S 3]{Behrndt} and later generalized by Smoczyk and Wang in \cite{SmoczykWang}. 

\begin{definition}\label{GenMCF}
The generalized mean curvature vector field $H^{g}$ of $F:L\rightarrow M$ is a normal vector field defined by 
\begin{align*}
H^{g}=H-m(\nabla \psi )^{\bot}. 
\end{align*}
\end{definition}
Here $H$ is the ordinary mean curvature vector field of $F:L\rightarrow M$, $\nabla$ is the gradient with respect to $g$,
 and $\bot$ is the projection from $TM$ to $T^{\bot}L$ it is the $g$-orthogonal complement of $F_{*}(TL)$. 

Note that if $\psi$ is constant or equivalently $(M,\omega,\Omega)$ is Ricci-flat, then $H^{g} \equiv H$. 
As well known, if the ambient space is a Calabi--Yau manifold, then the Lagrangian angle $\theta_{F}$ of a Lagrangian submanifold $F:L\rightarrow M$ and 
its mean curvature vector field $H$ satisfy the equation 
\begin{align*}
H= J\nabla\theta_{F}. 
\end{align*}
More precisely, $H=JF_{*}(\nabla_{F^{*}g}\theta_{F})$ where $\nabla_{F^{*}g}$ is the $(F^{*}g)$-gradient on $L$, however we write it as above for short. 
On the other hand, if the ambient space is an almost Calabi--Yau manifold, the above equation does not hold in general. 
However if we take $H^{g}$ instead of $H$, the above equation holds. This is proved by Behrndt \cite[Prop. 4]{Behrndt}.
\begin{proposition}[cf. {\cite[Prop. 4]{Behrndt}}]\label{GMCF}
Let $F:L\rightarrow M$ be a Lagrangian submanifold in an almost Calabi--Yau manifold. 
Then the generalized mean curvature vector field satisfies $H^{g}= J \nabla\theta_{F}$.
\end{proposition}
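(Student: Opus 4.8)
\noindent\emph{Proof proposal.} The plan is to prove the identity by differentiating the defining relation of the Lagrangian angle along $L$ and comparing real and imaginary parts. Since the statement is local and tensorial, I would fix $p\in L$, choose a local orientation of $L$ near $p$ (this removes the sign ambiguity of $\mathrm dV_{F^{*}g}$ and fixes a smooth branch of $\theta_{F}$), and pick a positively oriented local $g$-orthonormal frame $E_{1},\dots,E_{m}$ of $TL$. Because $F$ is Lagrangian, $JE_{1},\dots,JE_{m}$ is then a $g$-orthonormal frame of the normal bundle $T^{\perp}L$, and with this normalization $\mathrm dV_{F^{*}g}(E_{1},\dots,E_{m})=1$. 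The defining equation thus reads $F^{*}\Omega(E_{1},\dots,E_{m})=e^{i\theta_{F}+m\psi}$ along $L$, where I write $\psi$ for $F^{*}\psi$ and abbreviate $F_{*}E_{j}$ by $E_{j}$. Applying $E_{k}$ to both sides, the right-hand side becomes $\big(i\,d\theta_{F}(E_{k})+m\,d\psi(E_{k})\big)e^{i\theta_{F}+m\psi}$, so the whole matter reduces to computing the covariant derivative on the left.

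Using the Levi-Civita connection $\nabla^{M}$ of $g$, I would expand $E_{k}\big(\Omega(E_{1},\dots,E_{m})\big)=(\nabla^{M}_{E_{k}}\Omega)(E_{1},\dots,E_{m})+\sum_{j}\Omega(E_{1},\dots,\nabla^{M}_{E_{k}}E_{j},\dots,E_{m})$. For the first term the key point is that $\Omega$ is a holomorphic section of $K_{M}$ and that on a Kähler manifold the Levi-Civita connection induces the Chern connection on $K_{M}$; combined with $|\Omega|^{2}_{\omega}=\mathrm{const}\cdot e^{2m\psi}$, which is precisely the content of the definition of $\psi$, this gives $\nabla^{M}_{E_{k}}\Omega=2m\,(\partial\psi)(E_{k})\,\Omega$ and hence a contribution $2m(\partial\psi)(E_{k})e^{i\theta_{F}+m\psi}$. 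For the second term I would write $\nabla^{M}_{E_{k}}E_{j}=\nabla^{L}_{E_{k}}E_{j}+h(E_{k},E_{j})$ with $\nabla^{L}$ the induced connection and $h$ the second fundamental form, and use that $\Omega$ is of type $(m,0)$, so that $\Omega(\dots,JX,\dots)=i\,\Omega(\dots,X,\dots)$. Expanding $\nabla^{L}_{E_{k}}E_{j}$ in the frame $(E_{i})$ and $h(E_{k},E_{j})$ in the frame $(JE_{i})$, only the $i=j$ terms survive: the tangential parts contribute $\sum_{j}g(\nabla^{L}_{E_{k}}E_{j},E_{j})=\tfrac{1}{2}E_{k}\big(\sum_{j}|E_{j}|^{2}\big)=0$, and the normal parts contribute $i\sum_{j}g(h(E_{k},E_{j}),JE_{j})\,e^{i\theta_{F}+m\psi}$.

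Next I would invoke the purely Lagrangian fact that the cubic form $C(X,Y,Z):=g(h(X,Y),JZ)$ on $L$ is totally symmetric: symmetry in $X,Y$ is automatic, and symmetry in $Y,Z$ follows from $\nabla^{M}J=0$ together with $J(TL)=T^{\perp}L$. This yields $\sum_{j}g(h(E_{k},E_{j}),JE_{j})=g(H,JE_{k})$, so the left-hand side equals $\big(2m(\partial\psi)(E_{k})+i\,g(H,JE_{k})\big)e^{i\theta_{F}+m\psi}$. Equating with the right-hand side and using $2m\,(\partial\psi)(E_{k})=m\,d\psi(E_{k})-i\,m\,d\psi(JE_{k})$ for the real function $\psi$, the real parts cancel and the imaginary parts give $d\theta_{F}(E_{k})=g(H,JE_{k})-m\,d\psi(JE_{k})$. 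Since $JE_{k}$ is normal, $d\psi(JE_{k})=g((\nabla\psi)^{\perp},JE_{k})$, whence $d\theta_{F}(E_{k})=g(H-m(\nabla\psi)^{\perp},JE_{k})=g(H^{g},JE_{k})$. As this holds for every $k$ and $\nabla\theta_{F}$ here denotes the $F^{*}g$-gradient on $L$, we obtain $\nabla\theta_{F}=-JH^{g}$ along $L$ (note $JH^{g}$ is tangent to $L$ because $H^{g}$ is normal and $L$ is Lagrangian), and applying $J$ gives $H^{g}=J\nabla\theta_{F}$.

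The step I expect to be the main obstacle is pinning down the first contribution precisely, namely the identity $\nabla^{M}_{Z}\Omega=2m\,(\partial\psi)(Z)\,\Omega$: one has to extract the correct constant and sign in $|\Omega|^{2}_{\omega}$ from the normalization defining $\psi$, and use holomorphicity of $\Omega$ to see that only the $(1,0)$-part of $d\psi$ appears. Everything else is either a routine pointwise computation with $(m,0)$-forms or the standard symmetry of the second fundamental form of a Lagrangian submanifold; as a consistency check, when $\psi$ is constant the whole argument collapses to the familiar Calabi--Yau identity $H=J\nabla\theta_{F}$.
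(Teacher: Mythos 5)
Your argument is correct and complete: the paper itself gives no proof of this proposition, deferring entirely to Behrndt's Proposition 4, and your computation (differentiating $F^{*}\Omega(E_{1},\dots,E_{m})=e^{i\theta_{F}+m\psi}$, using that the Levi-Civita connection induces the Chern connection on $K_{M}$ with $|\Omega|^{2}_{\omega}=\mathrm{const}\cdot e^{2m\psi}$, and invoking the total symmetry of $g(h(X,Y),JZ)$ on a Lagrangian) is exactly the standard argument found in that reference. The step you flag as delicate is indeed harmless, since only $\partial\log|\Omega|^{2}=2m\,\partial\psi$ enters and the normalization constant drops out.
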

It is clear that if $L$ is connected, then L is a special Lagrangian submanifold if and only if $H^{g}\equiv 0$. 
For more motivation to introduce the generalized mean curvature vector field and some properties, refer the paper of Behrndt \cite{Behrndt2}. 

\section{Lagrangian angle}\label{LA}
Let $(C(S),\overline{g})$ be the toric K\"ahler cone over a $(2m-1)$-dimensional toric Sasaki manifold $(S,g)$. 
In this section we assume that the canonical line bundle $K_{C(S)}$ is trivial. 
As mentioned in Remark \ref{trivial}, this assumption is equivalent to that 
there exists an element $\gamma\in(\mathbb{Z}_{\mathfrak{g}})^{*}\cong\mathbb{Z}^{m}$ such that 
$$\langle \gamma, \lambda \rangle = 1$$
for all $\lambda\in\Lambda$. 
Then we can take a non-vanishing holomorphic $(m,0)$-form $\Omega_{\gamma}$ which is expressed as 
$$\Omega_{\gamma}=\exp(\gamma_{1}z^{1}+\dots+\gamma_{m}z^{m})dz^{1}\wedge\dots\wedge dz^{m}$$
on the open dense $T_{\mathbb{C}}^{m}$-orbit $U_{0}\cong(\mathbb{C}^{\times})^{m}$ 
by the logarithmic holomorphic coordinates $(z^{k})_{k=1}^{m}$. 
Thus we have a toric almost Calabi--Yau cone manifold $(C(S),\omega,\Omega_{\gamma})$. 

Remember that in Section \ref{CL} we took the data 
$c \in \mathbb{R}$, 
$\zeta \in \mathfrak{g}$, 
$I \subset \mathbb{R}$, 
$f:I\rightarrow \mathbb{R}$, 
$\rho:I\rightarrow \mathbb{R}^{+}$ and 
$\tau_{0}\in T^{m}$, and we denoted $\tau(t)=\exp(f(t)\zeta)\cdot\tau_{0}$. 
We have defined
a submanifold 
$$C(S)^{\sigma}_{\zeta,c}=\{\, p\in C(S)^{\sigma} \mid \langle\mu(p),\zeta\rangle =c \,\}, $$ an $m$-dimensional manifold 
$$L_{\zeta,c}=C(S)^{\sigma}_{\zeta,c}\times I$$ and a map $F:L_{\zeta,c}\rightarrow C(S)$ by 
$$F(p,t)=\rho(t)\cdot\tau(t)\cdot p.$$ 
Then by Theorem \ref{Lag}, $F:L_{\zeta,c} \rightarrow C(S)$ is a Lagrangian submanifold. 

In this section, we want to compute $F^{*} \Omega_{\gamma}$ and the Lagrangian angle $\theta_{F}$. 
Let $U_{0}\cong (\mathbb{C}^{\times})^{m}$ be an open dense $T^{m}_{\mathbb{C}}$-orbit and 
$(z^{k})_{k=1}^{m}$ be the logarithmic holomorphic coordinates on $U_{0}$. 
Then $C(S)^{\sigma}\cap U_{0}=\{\, (x^{1},\dots,x^{m})\in\mathbb{R}^{m} \,\}$ and 
$$C(S)^{\sigma}_{\zeta,c}\cap U_{0}=\{\, (x^{1},\dots,x^{m}) \mid \langle \mu(x),\zeta\rangle=c \,\}. $$ 
We have only to compute $F^{*} \Omega_{\gamma}$ on this open dense subset. 
If we denote $\tau_{0}=(e^{i\nu^{1}},\dots e^{i\nu^{m}})\in T^{m}$ then we have the following lemma. 

\begin{lemma}\label{angle1234}
The Lagrangian angle of $F:L_{\zeta,c}\rightarrow (C(S),\omega,\Omega_{\gamma})$ is given by
\begin{align}\label{angle123}
\theta_{F}(x,t)=&f(t)\sum_{k=1}^{m}\gamma_{k}\zeta^{k}  +\sum_{k=1}^{m}\gamma_{k}\nu^{k} \\
&+\arg \biggl(   \sum_{k=1}^{m} \biggl(\biggl(\frac{\dot{\rho}(t)}{\rho(t)}\xi^{k}+i\dot{f}(t)\zeta^{k}\biggr)\frac{\partial \langle \mu(x), \zeta \rangle}{\partial x^{k}} \biggr)     \biggr)\quad \mathrm{mod}\, \pi, \notag
\end{align}
where $\xi=(\xi^{1},\dots,\xi^{m})$ is the Reeb field on $C(S)$. 
\end{lemma}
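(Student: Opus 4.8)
The plan is to compute $F^{*}\Omega_{\gamma}$ directly on the open dense chart $U_{0}$ and read off its argument. First I would recall from the proof of Theorem \ref{Lag} the explicit description of $F_{*}$: for $X \in T_{p_{0}}C(S)^{\sigma}_{\zeta,c}$ we have $F_{*}X = (\rho(t_{0})\cdot\tau(t_{0}))_{*}X$, and $F_{*}\partial/\partial t = (\rho(t_{0})\cdot\tau(t_{0}))_{*}\bigl(\tfrac{\dot\rho}{\rho}\overrightarrow{p_{0}} + \dot f\,\zeta(p_{0})\bigr)$. Since $\rho(t_{0})\cdot\tau(t_{0})$ is the composite of an $\mathbb{R}^{+}$-action and a $T^{m}$-action, in the logarithmic holomorphic coordinates $(z^{k})$ it acts as a translation $z^{k} \mapsto z^{k} + (\log\rho(t_{0}))\cdot(\text{something real}) + i(f(t_{0})\zeta^{k} + \nu^{k})$; more precisely the $T^{m}_{\mathbb{C}}$-part multiplies $\Omega_{\gamma}$ by $\exp\bigl(\sum_{k}\gamma_{k}(f(t_{0})\zeta^{k} i + \nu^{k} i)\bigr)$ up to a real factor absorbed into $\psi$. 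This already produces the first line $f(t)\sum_{k}\gamma_{k}\zeta^{k} + \sum_{k}\gamma_{k}\nu^{k}$ of \eqref{angle123} as a contribution to the argument.

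Next I would evaluate $\Omega_{\gamma}$ on the frame $\{\partial/\partial x^{1},\dots,\partial/\partial x^{m-1}\}$ of $T_{p_{0}}C(S)^{\sigma}_{\zeta,c}$ together with $\tfrac{\dot\rho}{\rho}\overrightarrow{p_{0}} + \dot f\,\zeta(p_{0})$. The key observation is that $C(S)^{\sigma}_{\zeta,c}$ sits inside the real locus $\{y^{1}=\dots=y^{m}=0\}$, cut out there by $\langle\mu(x),\zeta\rangle = c$, so its tangent space is the kernel of $d\langle\mu(x),\zeta\rangle = \sum_{k}\tfrac{\partial\langle\mu,\zeta\rangle}{\partial x^{k}}dx^{k}$. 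Writing $\overrightarrow{p_{0}} = r\partial/\partial r$ in terms of $\sum_{k}\xi^{k}\partial/\partial x^{k}$ (this uses $\xi = Jr\partial/\partial r$ and the relation between $\partial/\partial x^{k}$, $\partial/\partial y^{k}$ under $J$ on a toric chart, so that $r\partial/\partial r$ has real coordinate components $\xi^{k}$) and $\zeta(p_{0}) = \sum_{k}\zeta^{k}\partial/\partial y^{k} = i\sum_{k}\zeta^{k}\partial/\partial x^{k}$ in the complexified sense, the vector $\tfrac{\dot\rho}{\rho}\overrightarrow{p_{0}} + \dot f\,\zeta(p_{0})$ has complex components $\tfrac{\dot\rho}{\rho}\xi^{k} + i\dot f\,\zeta^{k}$. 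Plugging the frame into $dz^{1}\wedge\dots\wedge dz^{m}$ and expanding the determinant along the last column: choosing the basis of $\ker d\langle\mu,\zeta\rangle$ appropriately (or just computing the $m\times m$ determinant with rows $e_{1},\dots,e_{m-1}$ spanning the kernel and the last row $(\tfrac{\dot\rho}{\rho}\xi^{k}+i\dot f\zeta^{k})_{k}$), the determinant equals, up to a nonzero real constant depending only on the chosen basis of the kernel, the contraction $\sum_{k}\bigl(\tfrac{\dot\rho}{\rho}\xi^{k}+i\dot f\zeta^{k}\bigr)\tfrac{\partial\langle\mu(x),\zeta\rangle}{\partial x^{k}}$ — this is the standard fact that $\det$ of (kernel basis, extra vector) $\propto$ (extra vector paired against the defining covector). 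Taking $\arg$ of this gives the second line of \eqref{angle123}, and combining with the $T^{m}_{\mathbb{C}}$-contribution and absorbing all real positive factors into $e^{mF^{*}\psi}\,dV$ completes the identification of $\theta_{F}$.

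The main obstacle I anticipate is the bookkeeping in the middle step: being careful that the "real positive constant" relating the wedge-product determinant to the contraction $\sum_{k}(\cdots)\tfrac{\partial\langle\mu,\zeta\rangle}{\partial x^{k}}$ is genuinely real and positive (so that it does not alter the argument), and correctly tracking how $\overrightarrow{p_{0}}$ and $\zeta(p_{0})$ decompose into the $\partial/\partial x^{k}$ and $\partial/\partial y^{k}$ components — equivalently, justifying that in these coordinates the real part contributes $\xi^{k}$ and the imaginary part contributes $\dot f\zeta^{k}$. I would handle the positivity by working with an oriented basis of $\ker d\langle\mu,\zeta\rangle$ and noting any sign/scale ambiguity is exactly the sign ambiguity already present in $dV_{F^{*}g}$ (the paper explicitly allows $L$ to be non-oriented, so $\theta_{F}$ is only defined mod $\pi$, which matches the "$\mathrm{mod}\ \pi$" in the statement). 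The $T^{m}$-equivariance \eqref{toric-sigma} and the fact that $\langle\mu,\zeta\rangle$ is constant on $C(S)^{\sigma}_{\zeta,c}$ — both recalled in the proof of Theorem \ref{Lag} — are what guarantee the expression is well defined and independent of the point chosen in a fiber.
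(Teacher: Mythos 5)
Your proposal is correct and follows essentially the same route as the paper: both work in the logarithmic coordinates on the dense orbit, extract the first line of \eqref{angle123} from the translation action $z^{k}\mapsto z^{k}+\xi^{k}\log\rho(t)+i(f(t)\zeta^{k}+\nu^{k})$ applied to the exponential factor of $\Omega_{\gamma}$, and obtain the second line by restricting $d(\tilde F^{*}z^{1})\wedge\dots\wedge d(\tilde F^{*}z^{m})$ to the level set $\langle\mu(x),\zeta\rangle=c$, where your determinant-against-the-defining-covector identity is exactly what the paper computes via the implicit function theorem (its real prefactor $(-1)^{m-k_{0}}(\partial\langle\mu,\zeta\rangle/\partial x^{k_{0}})^{-1}$ is your ``real constant,'' harmless mod $\pi$). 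The only cosmetic slip is the phrase ``absorbed into $\psi$'': the positive factor $\exp(\sum_{k}\gamma_{k}x^{k}+\log\rho\sum_{k}\gamma_{k}\xi^{k})$ simply drops out of the argument, which is all that is needed.
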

\begin{proof}
Let $\tilde{L}=C(S)^{\sigma}\times I$ and $\iota:L_{\zeta,c}\rightarrow \tilde{L}$ be an inclusion map. 
If we define $\tilde{F}:\tilde{L}\rightarrow C(S)$ by 
$$\tilde{F}(p,t)=\rho(t)\cdot\tau(t)\cdot p, $$
then $F=\tilde{F}\circ \iota$ and $F^{*}\Omega_{\gamma}=\iota^{*}(\tilde{F}^{*}\Omega_{\gamma})$. 
For $\tau=(e^{i\theta^{1}},\dots,e^{i\theta^{m}})\in T^{m}$, the transition map $\tau:U_{0}\rightarrow U_{0}$ is expressed by 
$$\tau\cdot (z^{1},\dots z^{m})=(z^{1}+i\theta^{1},\dots,z^{m}+i\theta^{m}). $$
Since $J(r\frac{\partial}{\partial r})=\xi$ and 
$$\xi=\xi^{1}\frac{\partial}{\partial y^{1}}+\dots+\xi^{m}\frac{\partial}{\partial y^{m}}, $$
we have 
$$r\frac{\partial}{\partial r}=\xi^{1}\frac{\partial}{\partial x^{1}}+\dots+\xi^{m}\frac{\partial}{\partial x^{m}}. $$
Hence for $\rho\in\mathbb{R}^{+}$ the transition map $\rho:U_{0}\rightarrow U_{0}$ is expressed by 
$$\rho\cdot (z^{1},\dots z^{m})=(z^{1}+\xi^{1}\log\rho,\dots,z^{m}+\xi^{m}\log\rho). $$
Then we have 
$$ (\tilde{F}^{*}z^{k})(x^{1},\dots,x^{m},t)= x^{k}+\xi^{k}\log\rho(t)+i(f(t)\zeta^{k}+\nu^{k}). $$
Since 
$$\Omega_{\gamma}=\exp(\gamma_{1}z^{1}+\dots+\gamma_{m}z^{m})dz^{1}\wedge\dots\wedge dz^{m}$$
on $U_{0}$ we have 
$$\tilde{F}^{*}\Omega_{\gamma}=\exp(h_{1}(x,t)+ih_{2}(x,t))d(\tilde{F}^{*}z^{1})\wedge\dots\wedge d(\tilde{F}^{*}z^{m}), $$
where we put 
\begin{align*}
& h_{1}(x,t)=\sum_{k=1}^{m}\gamma_{k}x^{k}+\log\rho(t)\sum_{k=1}^{m}\gamma_{k}\xi^{k}, \\
& h_{2}(x,t)=f(t)\sum_{k=1}^{m}\gamma_{k}\zeta^{k}+\sum_{k=1}^{m}\gamma_{k}\nu^{k}\quad and \\
& d(\tilde{F}^{*}z^{k})=dx^{k}+\biggl( \frac{\dot{\rho}(t)}{\rho(t)}\xi^{k}+i\dot{f}(t)\zeta^{k}\biggr)dt. 
\end{align*}
Fix a point $p_{0} \in C(S)^{\sigma}_{\zeta,c} \cap U_{0}$. 
If we put $\phi(x):=\langle \mu (x), \zeta \rangle -c$, then $C(S)^{\sigma}_{\zeta,c}$ is locally expressed around $p_{0}$ 
as $\{~(x^{1},\dots,x^{m})~|~\phi(x^{1},\dots,x^{m})=0~\}$. 
By the definition of a moment map and the non-degeneracy of K\"ahler form, 
we have $ d \phi = -\omega(\zeta,\cdot) \neq 0 $ at $p_{0}$. 
Hence there exists $k_{0} \in \{1,\dots,m\}$ such that $\frac{\partial \phi}{\partial x^{k_{0}}}(p_{0})\neq 0$. 
Thus by the implicit function theorem, $x^{k_{0}}$ is locally represented as $x^{k_{0}}=x^{k_{0}}(x^{1},\dots,x^{k_{0}-1},x^{k_{0}+1},\dots,x^{m})$. 
Note that since $\phi(x^{1},\dots,x^{m})=0$, we have 
\begin{align*}
\frac{\partial \phi}{\partial x^{\ell}} + \frac{\partial \phi}{\partial x^{k_{0}}} \frac{\partial x^{k_{0}}}{\partial x^{\ell}}=0 
\end{align*}
for all $\ell\neq k_{0}$. 
If we take $(x^{1},\dots,x^{k_{0}-1},x^{k_{0}+1},\dots,x^{m})$ as a local coordinates on $C(S)^{\sigma}_{\zeta,c}$, we have 
\begin{align*}
&\iota^{*}(d(\tilde{F}^{*}z^{1})\wedge\dots\wedge d(\tilde{F}^{*}z^{m}))\\
=&h_{3}(x,t) dx^{1} \wedge \dots \wedge dx^{k_{0}-1} \wedge dx^{k_{0}+1} \wedge \dots \wedge dx^{m} \wedge dt, 
\end{align*}
where 
\begin{align*}
h_{3}(x,t)=(-1)^{m-k_{0}}\biggl( \frac{\partial \langle \mu(x),\zeta \rangle}{\partial x^{k_{0}}} \biggr)^{-1} \biggl(\sum_{\ell=1}^{m}\biggl( \frac{\dot{\rho}(t)}{\rho(t)}\xi^{\ell} + i\dot{f}(t)\zeta^{\ell} \biggr)\frac{\partial \langle \mu(x),\zeta \rangle}{\partial x^{\ell}}\biggr). 
\end{align*}
As mentioned in Remark \ref{angle}, the Lagrangian angle $\theta_{F}$ is 
$$\arg(h_{3}\exp(h_{1}+ih_{2}))=h_{2}+\arg(h_{3}). $$
One can prove that this coincides with the right hand side of the equation (\ref{angle123}). 
\end{proof}
\section{Construction of special Lagrangian submanifolds}\label{constSLag}
Let $(C(S),\omega,\Omega_{\gamma})$ be a toric almost Calabi--Yau cone over a toric Sasaki manifold $(S,g)$. 
In this section, we construct the special Lagrangian submanifolds in $C(S)$. 
Let $F:L(\zeta,c)\rightarrow C(S)$ be a Lagrangian submanifold explained in Section \ref{CL}. 
Then we find the conditions such that $F$ is a special Lagrangian submanifold. 
Remember that we denote the Reeb field $\xi$ and write $\tau_{0}=(e^{i\nu^{1}},\dots,e^{i\nu^{m}})\in T^m$. 
Here we put 
$$N:=\langle \zeta, \gamma \rangle =\sum_{k=1}^{m}\gamma_{k}\zeta^{k}\quad \mathrm{and} \quad \theta:=\sum_{k=1}^{m}\gamma_{k}\nu^{k}. $$
\begin{theorem}\label{54321}
Assume that the function $\rho:I\rightarrow\mathbb{R}^{+}$ is identically constant. 
Take a constant $\theta_{0}\in\mathbb{R}$. 
Then $F:L_{\zeta,c}\rightarrow C(S)$ is a special Lagrangian submanifold with phase $e^{i\theta_{0}}$ if and only if 
\begin{align*}
N=0\quad and \quad \theta+\frac{\pi}{2}=\theta_{0}. 
\end{align*}
\end{theorem}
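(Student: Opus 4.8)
The plan is to specialize the Lagrangian angle formula from Lemma \ref{angle1234} to the hypothesis that $\rho$ is constant, and then read off exactly when the resulting function of $(x,t)$ is identically equal to the given constant $\theta_{0}$ modulo $\pi$. Recall that Definition \ref{SLag} says $F$ is special Lagrangian with phase $e^{i\theta_{0}}$ precisely when $\theta_{F}\equiv\theta_{0}$ in $\mathbb{R}/\pi\mathbb{Z}$, so the whole proof is an analysis of formula (\ref{angle123}).

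First I would substitute $\dot\rho\equiv 0$ into (\ref{angle123}). The first line of the formula becomes $f(t)N+\theta$ with $N=\langle\zeta,\gamma\rangle$ and $\theta=\sum_k\gamma_k\nu^k$ in the notation just introduced. In the argument term, the factor $\frac{\dot\rho}{\rho}\xi^k$ vanishes, leaving
\begin{align*}
\arg\Bigl(i\dot f(t)\sum_{k=1}^{m}\zeta^{k}\frac{\partial\langle\mu(x),\zeta\rangle}{\partial x^{k}}\Bigr).
\end{align*}
The next point is that the real sum $\sum_k\zeta^k\,\partial\langle\mu(x),\zeta\rangle/\partial x^k$ is nonzero: by the computation in the proof of Lemma \ref{angle1234}, $\partial\langle\mu(x),\zeta\rangle/\partial x^k$ are the components of $d\phi$, which equals $-\omega(\zeta,\cdot)\neq 0$ on $C(S)^{\sigma}_{\zeta,c}$ by assumption (\ref{ass2}); pairing this nonzero covector against the real vector $\zeta$ and using that $\omega(\zeta(p),\zeta(p))=0$ is not what appears — rather one checks the pairing $\sum_k\zeta^k\partial\phi/\partial x^k$ equals $d\phi(r\partial/\partial r$-type combination$)$, and one argues it is nonzero because $\langle\mu,\zeta\rangle=\tfrac12 r^2\eta(\zeta)$ grows in $r$ while $\zeta$ has a component in the radial direction under assumption (\ref{ass2}); in any case $\dot f$ is non-vanishing on $I$ by hypothesis, so the quantity inside $\arg$ is a nonzero purely imaginary number, hence its argument is $\pm\pi/2$, and in $\mathbb{R}/\pi\mathbb{Z}$ it is the single class $\pi/2$, independent of $(x,t)$. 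Therefore $\theta_{F}(x,t)\equiv f(t)N+\theta+\tfrac{\pi}{2}\pmod\pi$.

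Now the equivalence is immediate: $\theta_F\equiv\theta_0$ on all of $L_{\zeta,c}$ iff $f(t)N+\theta+\tfrac{\pi}{2}\equiv\theta_0\pmod\pi$ for every $t\in I$. Since $\dot f$ is non-vanishing, $f$ is non-constant on the interval $I$, so the map $t\mapsto f(t)N$ is constant modulo $\pi$ only if $N=0$; and conversely if $N=0$ the condition reduces to $\theta+\tfrac{\pi}{2}\equiv\theta_0\pmod\pi$. One should note that the theorem states the equality $\theta+\frac{\pi}{2}=\theta_0$ rather than a congruence, which is harmless: $\theta$ and $\theta_0$ are only determined up to the obvious ambiguity, so one may normalize representatives. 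The only genuine subtlety I anticipate is the claim that the real sum $\sum_k\zeta^k\,\partial\langle\mu(x),\zeta\rangle/\partial x^k$ never vanishes on $C(S)^{\sigma}_{\zeta,c}$ — this is where assumption (\ref{ass2}) (equivalently, $\zeta(p)\neq 0$ on $\mu^{-1}(\Delta_{\zeta,c})$, so $\zeta(p)$ has nonzero radial component since it is $\overline g$-orthogonal to the Lagrangian $C(S)^\sigma$ and $J\zeta(p)$ lies in the real form) must be used carefully; I would handle it by rewriting $\sum_k\zeta^k\partial_{x^k}\langle\mu,\zeta\rangle = (r\partial_r)$-derivative of $\langle\mu,\zeta\rangle$ expressed through the identification $r\partial/\partial r=\sum_k\xi^k\partial_{x^k}$ from Lemma \ref{angle1234} together with the relation between $\xi$ and $\zeta$, or more directly by observing that this sum is (up to sign and the nonzero factor coming from $x^{k_0}$) exactly $h_3$ evaluated with $\dot\rho=0$, which is nonzero since $\tilde F^*\Omega_\gamma$ is non-vanishing. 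Everything else is bookkeeping with the formula already proved.
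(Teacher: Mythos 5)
Your proposal is correct and follows essentially the same route as the paper: substitute $\dot\rho\equiv 0$ into the formula of Lemma \ref{angle1234}, observe that the $\arg$ term is the class of $\pi/2$ in $\mathbb{R}/\pi\mathbb{Z}$ since the quantity inside is nonzero purely imaginary, so $\theta_F=f(t)N+\theta+\tfrac{\pi}{2}$, and then use the non-vanishing of $\dot f$ to conclude. The extra care you take over the non-vanishing of $\sum_k\zeta^k\,\partial\langle\mu(x),\zeta\rangle/\partial x^k$ (which the paper leaves implicit) is settled cleanly by your final observation that this quantity is, up to nonzero real factors, $h_3$ with $\dot\rho=0$, and $h_3\neq 0$ because $F^{*}\Omega_{\gamma}$ is non-vanishing on a Lagrangian immersion.
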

\begin{proof}
Since $\dot{\rho}(t)=0$, by Lemma \ref{angle1234} we have the Lagrangian angle
$$\theta_{F}(p,t)=f(t)N+\theta+\frac{\pi}{2}. $$
Note that we have assumed that $f(t)$ is not constant. 
Thus the statement follows clearly. 
\end{proof}

\begin{theorem}\label{12345}
We assume that $\zeta=\xi$, and put $\kappa(t):=\log\rho(t)$. 
Take a constant $\theta_{0}\in\mathbb{R}$. 
Then $F:L_{\zeta,c}\rightarrow C(S)$ is a special Lagrangian submanifold with phase $e^{i\theta_{0}}$ 
if and only if
\begin{align}
\mathrm{Im}(e^{i(\theta-\theta_{0})}e^{N(\kappa(t)+if(t))})=\mathrm{const}\label{123456}
\end{align}
\end{theorem}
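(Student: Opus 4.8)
The plan is to reduce everything to Lemma~\ref{angle1234} and then manipulate the resulting expression for $\theta_F$. With the assumption $\zeta=\xi$ we have $\zeta^k=\xi^k$ for all $k$, so the bracket appearing inside the $\arg$ in \eqref{angle123} becomes
\begin{align*}
\sum_{k=1}^{m}\Bigl(\tfrac{\dot\rho(t)}{\rho(t)}+i\dot f(t)\Bigr)\xi^{k}\,\frac{\partial\langle\mu(x),\zeta\rangle}{\partial x^{k}}
=\Bigl(\tfrac{\dot\rho(t)}{\rho(t)}+i\dot f(t)\Bigr)\sum_{k=1}^{m}\xi^{k}\,\frac{\partial\langle\mu(x),\zeta\rangle}{\partial x^{k}},
\end{align*}
so the $x$-dependent factor $\sum_k\xi^k\partial_{x^k}\langle\mu,\zeta\rangle$ pulls out of the $\arg$. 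First I would check this factor is a nonzero real number on $C(S)^\sigma_{\zeta,c}$: it equals $(r\partial_r)\langle\mu,\zeta\rangle$, and since $\mu$ is homogeneous of degree $2$ in $r$ under the $\mathbb{R}^+$-action (because $\rho^*\omega=\rho^2\omega$, hence $\langle\mu,\zeta\rangle$ scales like $r^2$), this derivative equals $2\langle\mu,\zeta\rangle=2c\neq 0$ on $C(S)^\sigma_{\zeta,c}$ (using $c\neq0$, which follows from assumption~\eqref{ass2} forcing $\zeta(p)\neq0$, equivalently $c\neq0$ since $\langle\mu(p),\xi\rangle=\tfrac12 r^2>0$). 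Being a positive constant, it contributes $0$ to the argument, so modulo $\pi$,
\begin{align*}
\theta_F(x,t)=f(t)N+\theta+\arg\Bigl(\tfrac{\dot\rho(t)}{\rho(t)}+i\dot f(t)\Bigr).
\end{align*}

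Next I would rewrite the $\arg$ term using $\kappa(t)=\log\rho(t)$, so $\dot\rho/\rho=\dot\kappa$, giving $\arg(\dot\kappa+i\dot f)=\arg\tfrac{d}{dt}(\kappa+if)$. The key observation is that $\tfrac{d}{dt}e^{N(\kappa+if)}=N(\dot\kappa+i\dot f)e^{N(\kappa+if)}$, so
\begin{align*}
\arg\Bigl(\tfrac{d}{dt}e^{N(\kappa(t)+if(t))}\Bigr)
=\arg(N)+\arg(\dot\kappa+i\dot f)+N f(t)\pmod{\pi},
\end{align*}
where $\arg(N)\in\{0,\pi\}$ contributes nothing mod $\pi$ (when $N\neq0$; the case $N=0$ is handled separately below). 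Therefore $\theta_F(x,t)\equiv\theta+\arg\bigl(\tfrac{d}{dt}e^{N(\kappa(t)+if(t))}\bigr)\pmod\pi$. By Definition~\ref{SLag}, $F$ is special Lagrangian with phase $e^{i\theta_0}$ iff $\theta_F\equiv\theta_0\pmod\pi$, i.e. iff $\arg\bigl(\tfrac{d}{dt}e^{N(\kappa+if)}\bigr)\equiv\theta_0-\theta\pmod\pi$, which says exactly that $\tfrac{d}{dt}e^{N(\kappa+if)}$ always lies on the line $\mathbb{R}\,e^{i(\theta_0-\theta)}$ in $\mathbb{C}$. That is equivalent to $\mathrm{Im}\bigl(e^{-i(\theta_0-\theta)}\tfrac{d}{dt}e^{N(\kappa+if)}\bigr)=0$, i.e. $\tfrac{d}{dt}\,\mathrm{Im}\bigl(e^{i(\theta-\theta_0)}e^{N(\kappa(t)+if(t))}\bigr)=0$, which is precisely \eqref{123456}.

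For the degenerate case $N=0$: then $e^{N(\kappa+if)}\equiv1$, its $t$-derivative is $0$, so \eqref{123456} reads $\mathrm{Im}(e^{i(\theta-\theta_0)})=\mathrm{const}$, automatically true, while on the other side $\theta_F\equiv\theta+\arg(\dot\kappa+i\dot f)$, which is generally not constant unless $\arg(\dot\kappa+i\dot f)$ is — so I should be slightly careful and note that when $N=0$ the statement as phrased is to be read with the convention that \eqref{123456} holds trivially and $F$ is special Lagrangian iff additionally $\arg(\dot\kappa+i\dot f)$ is constant; alternatively one restricts to $N\neq0$, which is the interesting case and the one used in Theorem~\ref{11223}'s applications. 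The main obstacle I anticipate is precisely this bookkeeping of the $N=0$ edge case together with making the ``$\arg$ of a sum factors as a product'' step rigorous mod $\pi$ when the real factor could a priori change sign; the homogeneity argument above showing that factor is the positive constant $2c>0$ (or more carefully, has constant sign on the connected components of $C(S)^\sigma_{\zeta,c}$) is what makes it go through, and I would want to state that cleanly, perhaps remarking that $c>0$ after possibly replacing $\zeta$ by $-\zeta$.
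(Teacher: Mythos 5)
Your proof is correct and follows essentially the same route as the paper: reduce to Lemma~\ref{angle1234} with $\zeta=\xi$, recognize that $\theta_{F}$ equals $\theta$ plus the argument of $\frac{d}{dt}e^{N(\kappa(t)+if(t))}$ modulo $\pi$, and translate constancy of that argument into \eqref{123456}. The one place you hedge unnecessarily is the case $N=0$: it cannot occur under the hypotheses, since $\langle\gamma,\lambda\rangle=1\geq 0$ for all $\lambda\in\Lambda$ puts $\gamma$ in $\Delta$ while $\zeta=\xi$ lies in the open dual cone $\Delta_{0}^{*}$ (Remark~\ref{remarkSasaki}), so $N=\langle\gamma,\xi\rangle>0$ automatically --- this is exactly the observation the paper makes before multiplying by the positive factor $Ne^{N\kappa(t)}$, and no amended reading of the statement is needed. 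Your extra verification that the $x$-dependent factor equals $2c>0$ is a worthwhile addition (the paper discards it silently, which is in any case legitimate since multiplying by a nonzero real number does not change $\arg$ in $\mathbb{R}/\pi\mathbb{Z}$).
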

\begin{proof}
Since $\zeta=\xi$, by Lemma \ref{angle1234}, we have the Lagrangian angle 
\begin{align}
\theta_{F}(p,t)&=f(t)N+\theta+\arg(\dot{\kappa}(t)+i\dot{f}(t))\notag\\
&=\arg(  ( \dot{\kappa}(t)+i\dot{f}(t) )e^{i(f(t)N+\theta)}  ). \label{bca}
\end{align}
Note that $\gamma$ is in $\Delta$ since $\langle \gamma, \lambda \rangle=1$ for all $\lambda \in \Lambda$ and, 
as mentioned in Remark \ref{remarkSasaki}, the Reeb field $\xi=\zeta$ is in $\Delta_{0}^{*}$ 
and this means that $N=\langle \gamma, \zeta \rangle >0$. 
Since the argument of a complex valued function is unchanged by a multiplication of a positive function, 
we can multiply the term in the argument in (\ref{bca}) by $Ne^{N\kappa(t)}$ and we have
\begin{align*}
\theta_{F}(p,t)&=\arg(  ( \dot{\kappa}(t)+i\dot{f}(t) )e^{i(f(t)N+\theta)}  )\\
&=\arg(  N( \dot{\kappa}(t)+i\dot{f}(t) )e^{N\kappa(t)+i(f(t)N+\theta)} ). 
\end{align*}
If we put 
$$h(t)=e^{N\kappa(t)+i(f(t)N+\theta)}, $$
then it is clear that $\theta_{F}(p,t)=\arg(\dot{h}(t))$. 
Thus it follows that $\theta_{F}\equiv \theta_{0}$ constant if and only if 
$$\mathrm{Im}(e^{i(\theta-\theta_{0})}e^{N(\kappa(t)+if(t))})=\mathrm{const}. $$
\end{proof}

\begin{remark}\label{HL}
If we define the curves $c_{j}:I\rightarrow\mathbb{C}^{\times}$ by 
$$c_{j}(t):=\rho^{\xi^{j}}(t)e^{i(f(t)\xi^{j}+\nu^{j})},$$ 
then the equality (\ref{123456}) in Theorem \ref{12345} 
is equivalent to the equality
\begin{align*}
\mathrm{Im}\bigl(e^{-i\theta_{0}}c_{1}^{\gamma_{1}}\cdots c_{m}^{\gamma_{m}}\bigr)=\mathrm{const}. 
\end{align*}
For example in $\mathbb{C}^{m}$, the canonical Reeb field is $\xi=(1,\dots,1)$ and we can take $\gamma=(1,\dots,1)$. 
Then if we take $\theta_{0}=0$ and $\nu^{1}=\dots=\nu^{m}=0$ for example, 
then $c_{1}(t)=\dots=c_{m}(t)$, and we put $c(t):=c_{1}(t)$. 
Then the equality (\ref{123456}) in Theorem \ref{12345} becomes 
$$\mathrm{Im}(c^{m}(t))=\mathrm{const}, $$
and the image of $F:L_{\zeta,c}\rightarrow \mathbb{C}^{m}$ coincides with
$$\{\, (c(t)x^{1},\dots,c(t)x^{m}) \in \mathbb{C}^{m} \mid t\in I,\, x^{j}\in\mathbb{R},\, (x^1)^2+\dots+(x^m)^2=c\,\}. $$ 
Hence this is an extension of examples of special Lagrangian submanifolds mentioned in Theorem 3.5 in Section III.3.B. in the paper of Harvey and Lawson \cite{HarveyLawson}.
\end{remark}

\section{Construction of Lagrangian self-similar solutions}\label{constLss}
Let $(C(S),\omega,\Omega_{\gamma})$ be a toric almost Calabi--Yau cone over a toric Sasaki manifold $(S,g)$. 
Since $C(S)$ has both the cone structure and the almost Calabi--Yau structure, we can consider both the position vector and the generalized mean curvature vector. 
Then we can defined the generalized self-similar solution. 
Let $M$ be a manifold and $F:M\rightarrow C(S)$ be an immersion. 
Then we say that $F$ is a generalized self-similar solution if 
$$H^{g}=\lambda\overrightarrow{F}^{\bot}$$
for some $\lambda\in\mathbb{R}$. 
In this section, we construct the Lagrangian generalized self-similar solutions in $C(S)$. 
Let $F:L_{\zeta,c}\rightarrow C(S)$ be a Lagrangian submanifold explained in Section \ref{CL}. 
Remember that we denote the Reeb field $\xi$ and write $\tau_{0}=(e^{i\nu^{1}},\dots,e^{i\nu^{m}})\in T^m$, and 
in Section \ref{constSLag}, we put 
$$N=\langle \zeta, \gamma \rangle =\sum_{k=1}^{m}\gamma_{k}\zeta^{k}\quad \mathrm{and} \quad \theta=\sum_{k=1}^{m}\gamma_{k}\nu^{k}. $$

\begin{theorem}\label{11223}
Let us assume that $\zeta=\xi$, and put $c(t):=\rho(t)e^{if(t)}\in \mathbb{C}^{\times}$. 
If there exist a function $\theta:I\rightarrow\mathbb{R}/\pi\mathbb{Z}$ and a constant $A\in\mathbb{R}$, and 
$\theta(t)$ and $c(t)$ satisfy the differential equations
\begin{align}\label{LagS2-0}
\begin{cases}
  \dot{c}(t)=e^{i(\theta(t)-\theta)}\overline{c(t)}^{N-1} \\
  \dot{\theta}(t)=A\rho(t)^{N}\sin(f(t)N+\theta-\theta(t)), 
\end{cases}
\end{align}
then $F:L_{\zeta,c} \rightarrow C(S)$ is a Lagrangian generalized self-similar solution with
\begin{align*}
2 c H^{g} = A {\overrightarrow{F}}^{\bot} 
\end{align*}
and Lagrangian angle $\theta_{F}(p,t)=\theta(t)$. 
\end{theorem}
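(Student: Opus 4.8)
The plan is to reduce the whole statement to a single scalar identity along the $t$-direction, using two structural facts already available: Lemma~\ref{angle1234} for the Lagrangian angle, and Proposition~\ref{GMCF}, which gives $H^{g}=JF_{*}(\nabla_{F^{*}\overline{g}}\theta_{F})$, so that no second fundamental form ever has to be computed.

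First I would identify the Lagrangian angle. Putting $\zeta=\xi$ in Lemma~\ref{angle1234} and writing $\kappa(t)=\log\rho(t)$, the argument term becomes $\arg\bigl((\dot{\kappa}(t)+i\dot{f}(t))\sum_{k}\xi^{k}\,\partial\langle\mu,\xi\rangle/\partial x^{k}\bigr)$; since $r\frac{\partial}{\partial r}=\sum_{k}\xi^{k}\frac{\partial}{\partial x^{k}}$ and $\langle\mu,\xi\rangle=\tfrac12 r^{2}$, that sum equals $r^{2}>0$ and drops out, so $\theta_{F}(p,t)=f(t)N+\theta+\arg(\dot{\kappa}(t)+i\dot{f}(t))$ (the argument is defined, as $\dot{f}$ is nowhere zero by the hypotheses of Section~\ref{CL}). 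Because $c(t)=\rho(t)e^{if(t)}$ gives $\dot{c}(t)=\rho(t)(\dot{\kappa}(t)+i\dot{f}(t))e^{if(t)}$, one has $\arg\dot{c}=f+\arg(\dot{\kappa}+i\dot{f})$, hence $\theta_{F}=(N-1)f+\theta+\arg\dot{c}$ (congruences modulo $\pi$). The first equation of (\ref{LagS2-0}), with $\overline{c}=\rho e^{-if}$, reads $\dot{c}=\rho^{\,N-1}e^{i(\theta(t)-\theta-(N-1)f)}$, so $\arg\dot{c}=\theta(t)-\theta-(N-1)f$ and therefore $\theta_{F}(p,t)=\theta(t)$; in particular $\theta_{F}$ depends only on $t$, which is already the asserted Lagrangian angle.

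Next I would compute the induced metric and the two normal vectors to be compared. From the proof of Theorem~\ref{Lag}, $F_{*}X=(\rho(t)\tau(t))_{*}X$ for $X\in T C(S)^{\sigma}_{\zeta,c}$ and $F_{*}\partial_{t}=(\rho(t)\tau(t))_{*}\bigl(\dot{\kappa}(t)\overrightarrow{p}+\dot{f}(t)\xi(p)\bigr)$; since $T^{m}$ acts by isometries and $\rho^{*}\overline{g}=\rho^{2}\overline{g}$, and using $X(r)=0$, $\overrightarrow{p}=r\frac{\partial}{\partial r}\perp T C(S)^{\sigma}_{\zeta,c}$, $\xi(p)=J\overrightarrow{p}\perp T C(S)^{\sigma}$, $\overline{g}(\overrightarrow{p},\xi(p))=0$, and $|\overrightarrow{p}|^{2}=|\xi(p)|^{2}=r^{2}=2c$ (the last since $\langle\mu(p),\xi\rangle=c$ on $C(S)^{\sigma}_{\zeta,c}$), the metric $F^{*}\overline{g}$ has no cross terms and $|F_{*}\partial_{t}|^{2}=2c\,|\dot{c}(t)|^{2}$. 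As $\theta_{F}=\theta(t)$, Proposition~\ref{GMCF} gives $H^{g}=\dfrac{\dot{\theta}(t)}{2c\,|\dot{c}(t)|^{2}}\,JF_{*}\partial_{t}$. For the position vector, $\overrightarrow{F}(p,t)=(\rho(t)\tau(t))_{*}\overrightarrow{p}$; from $\overline{g}(\overrightarrow{p},X)=0$ and $\omega(\overrightarrow{p},X)=r^{2}\eta(X)=0$ (because $\xi(p)\perp X$) one gets $\overrightarrow{F}\perp F_{*}X$ and $\overrightarrow{F}\perp JF_{*}X$; since $L_{\zeta,c}$ is Lagrangian its normal bundle is $JF_{*}(TL_{\zeta,c})$, so $\overrightarrow{F}^{\bot}$ lies on $\mathbb{R}\,JF_{*}\partial_{t}$, and $\overline{g}(\overrightarrow{F},JF_{*}\partial_{t})=-\omega(\overrightarrow{F},F_{*}\partial_{t})=-2c\,\rho(t)^{2}\dot{f}(t)$ (using $\omega(\overrightarrow{p},\xi(p))=r^{2}$) yields $\overrightarrow{F}^{\bot}=\dfrac{-\rho(t)^{2}\dot{f}(t)}{|\dot{c}(t)|^{2}}\,JF_{*}\partial_{t}$.

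Finally I would match scalars: both $2cH^{g}$ and $A\overrightarrow{F}^{\bot}$ are multiples of the nonzero vector $JF_{*}\partial_{t}$, with coefficients $\dot{\theta}(t)/|\dot{c}(t)|^{2}$ and $-A\rho(t)^{2}\dot{f}(t)/|\dot{c}(t)|^{2}$, so $2cH^{g}=A\overrightarrow{F}^{\bot}$ is equivalent to $\dot{\theta}=-A\rho^{2}\dot{f}$. Cancelling $e^{if}$ in the first equation of (\ref{LagS2-0}) gives $\dot{\rho}+i\rho\dot{f}=\rho^{\,N-1}e^{i(\theta(t)-\theta-fN)}$, whose imaginary part is $\rho^{2}\dot{f}=\rho^{N}\sin(\theta(t)-\theta-fN)=-\rho^{N}\sin(fN+\theta-\theta(t))$; the second equation of (\ref{LagS2-0}) then reads exactly $\dot{\theta}=A\rho^{N}\sin(fN+\theta-\theta(t))=-A\rho^{2}\dot{f}$, which closes the argument. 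The main obstacle is the metric-and-orthogonality bookkeeping in the third paragraph — establishing that $F^{*}\overline{g}$ has no cross terms and that $\overrightarrow{F}$ has no component along the $JF_{*}X$ directions — since this is precisely what confines both $H^{g}$ and $\overrightarrow{F}^{\bot}$ to one line and turns the conclusion into a scalar check; the branch ambiguity of $\overline{c}^{\,N-1}$ for $N\notin\mathbb{Z}$ is harmless because only arguments enter.
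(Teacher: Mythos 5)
Your proposal is correct and follows essentially the same route as the paper: Lemma~\ref{angle1234} plus the first equation of (\ref{LagS2-0}) to identify $\theta_{F}=\theta(t)$, Proposition~\ref{GMCF} to express $H^{g}$ through $\nabla\theta_{F}$, and the imaginary part of the first equation together with the second equation to close the scalar identity in the $t$-direction. The only (harmless) difference is that you verify $2cH^{g}=A\overrightarrow{F}^{\bot}$ by explicitly writing both normal vectors as multiples of $JF_{*}\partial_{t}$ (which requires the block-diagonality of $F^{*}\overline{g}$ and the norm $|F_{*}\partial_{t}|^{2}=2c|\dot{c}|^{2}$), whereas the paper avoids computing any norms by testing the equality through $\omega$-pairings against all tangent vectors and invoking the Lagrangian splitting $T C(S)=F_{*}(TL)\oplus JF_{*}(TL)$.
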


\begin{proof}
First of all, we prove that the Lagrangian angle $\theta_{F}(p,t)$ is equal to $\theta(t)$. 
Since $\zeta=\xi$, by Lemma \ref{angle1234} we have the Lagrangian angle 
$$\theta_{F}(p,t)=f(t)N+\theta+\arg(\dot{\kappa}(t)+i\dot{f}(t)), $$
where $\kappa(t)=\log\rho(t)$. 
Since the argument of a complex valued function is unchanged under the multiplication of a positive real valued function, 
by multiplying $2\rho(t)^2$ we have
\begin{align*}
\arg(\dot{\kappa}(t)+i\dot{f}(t))&=\arg(2\rho(t)^2\dot{\kappa}(t)+2i\rho(t)^2\dot{f}(t))\\
                               &=\arg\biggl(\frac{d}{dt}(\rho(t)^2)+2i\rho(t)^2\dot{f}(t)\biggr). 
\end{align*}
Since $c(t)=\rho(t)e^{if(t)}$, we have
\begin{align*}
\dot{c}(t)=\dot{\rho}(t)e^{if(t)}+i\rho(t)\dot{f}(t)e^{if(t)}
\end{align*}
and multiplying this equation by $2\rho(t)e^{-if(t)}(=2\overline{c(t)})$ we have 
\begin{align}\label{equation1}
2\overline{c(t)}\dot{c}(t)=\frac{d}{dt}(\rho(t)^2)+2i\rho(t)^2\dot{f}(t). 
\end{align}
If we use the differential equation (\ref{LagS2-0}) with respect to $c(t)$ 
then the left hand side of (\ref{equation1}) is equal to 
\begin{align}\label{equation1-1}
2\overline{c(t)}\dot{c}(t)=2e^{i(\theta(t)-\theta)}\overline{c(t)}^N=2\rho(t)^N e^{i(\theta(t)-\theta-f(t)N)}. 
\end{align}
Thus we have
\begin{align*}
\arg(\dot{\kappa}(t)+i\dot{f}(t))=\theta(t)-\theta-f(t)N.
\end{align*}
Consequently we have proved that
\begin{align*}
\theta_{F}(p,t)=\theta(t). 
\end{align*}

We turn to the proof of $2cH^{g}=A {\overrightarrow{F}}^{\bot}$. 
Since $\omega$ is non-degenerate and we have the orthogonal decomposition 
$$T_{F(p)}C(S)=F_{*}(T_{p}L_{\zeta,c})\oplus J(F_{*}(T_{p}L_{\zeta,c}))$$
for all $p$ in $L_{\zeta,c}$, we have only to prove that 
$$\omega(2cH^{g},F_{*}X)=\omega(A{\overrightarrow{F}}^{\bot},F_{*}X)$$
for all $X$ tangent to $L_{\zeta,c}$. 
Furthermore, since $\omega(A{\overrightarrow{F}}^{\bot},F_{*}X)=\omega(A\overrightarrow{F},F_{*}X)$, 
it is equivalent to prove that
$$\omega(2cH^{g},F_{*}X)=\omega(A\overrightarrow{F},F_{*}X). $$
Remember that $L_{\zeta,c}=C(S)^{\sigma}_{\zeta,c}\times I$. 
Fix $x_{0}=(p_{0},t_{0})$ in $L_{\zeta,c}$, $X$ in $T_{p_{0}}C(S)^{\sigma}_{\zeta,c}$ and 
$\partial/\partial t$ in $T_{t_{0}}I$. 
See the equalities (\ref{FX}) and (\ref{FT}) in the proof of Theorem \ref{Lag}, we have 
\begin{align*}
& F_{*}X=(\rho(t_{0})\cdot \tau(t_{0}))_{*}X \\
& F_{*}\frac{\partial}{\partial t}=(\rho(t_{0})\cdot \tau(t_{0}))_{*}\biggl(\frac{\dot{\rho}(t_{0})}{\rho(t_{0})}\overrightarrow{p_{0}}+\dot{f}(t_{0})\xi(p_{0})\biggr). 
\end{align*} 
By Proposition \ref{GMCF} we have 
\begin{align*}
H^{g}=JF_{*}(\nabla_{F^{*}g}\theta_{F}), 
\end{align*}
where $\nabla_{F^{*}g}$ is the $(F^{*}g)$-gradient on $L$. 
By the definition of the position vector, one can prove that 
\begin{align*}
\overrightarrow{F}(x_{0})=(\rho(t_{0})\cdot \tau(t_{0}))_{*}(\overrightarrow{p_{0}})
\end{align*} 
at $x_{0}=(p_{0},t_{0})$. 
Note that we have proved that the Lagrangian angle
\begin{align*}
\theta_{F}(p,t)=\theta(t)
\end{align*}
and this function is independent of any points in $C(S)^{\sigma}_{\zeta,c}$. 
Thus if $X$ is tangent to $C(S)^{\sigma}_{\zeta,c}$ at $p_{0}$, then we have
\begin{align*}
\omega(2cH^{g},F_{*}X)&=2c\, \omega(JF_{*}(\nabla_{F^{*}g}\theta_{F}), F_{*}X)=-2c(F^{*}g)(\nabla_{F^{*}g}\theta_{F},X)\\
&=-2cX(\theta_{F})=0. 
\end{align*}
Since if we substitute two vectors tangent to the real form into $\omega$ then it is zero, and $\overrightarrow{p_{0}}$ is tangent to the real form, 
for $X$ tangent to $C(S)^{\sigma}_{\zeta,c}$ at $p_{0}$ we have
\begin{align*}
\omega(A\overrightarrow{F},F_{*}X)=A\rho^{2}(t_{0})\omega(\overrightarrow{p_{0}},X)=0. 
\end{align*}
Thus we have 
\begin{align*}
\omega(2cH^{g},F_{*}X)=0=\omega(A\overrightarrow{F},F_{*}X)
\end{align*}
for all $X$ tangent to $C(S)^{\sigma}_{\zeta,c}$ at $p_{0}$. 
Next, for $\partial/\partial t$ tangent to $I$ at $t_{0}$, we have
\begin{align*}
\omega(2cH^{g}, F_{*} \frac{\partial}{\partial t})&=2c\, \omega(JF_{*}(\nabla_{F^{*}g}\theta_{F}),F_{*}\frac{\partial}{\partial t})=-2c(F^{*}g)(\nabla_{F^{*}g}\theta_{F},\frac{\partial}{\partial t})\\
&=-2c\frac{\partial}{\partial t}\theta_{F}=-2c\dot{\theta}(t_{0})\\
&=-2cA\rho(t_{0})^{N}\sin(f(t_{0})N+\theta-\theta(t_{0})). 
\end{align*}
In the last equality, we use the differential equation (\ref{LagS2-0}) with respect to $\theta(t)$. 
On the other hand, we have 
\begin{align*}
\omega(A\overrightarrow{F},F_{*}\frac{\partial}{\partial t})&=A\rho^{2}(t_{0})\dot{f}(t_{0})\omega(\overrightarrow{p_{0}},\xi(p_{0}))=A\rho^{2}(t_{0})\dot{f}(t_{0})\overrightarrow{p_{0}}(\langle \mu, \xi \rangle))\\
 &=A\rho^{2}(t_{0})\dot{f}(t_{0})\frac{d}{d\rho}\biggl|_{\rho=1}\langle \mu(\rho\cdot p_{0}), \xi \rangle \\
 &=A\rho^{2}(t_{0})\dot{f}(t_{0})\frac{d}{d\rho}\biggl|_{\rho=1}\rho^2 \langle \mu( p_{0}), \xi \rangle \\
 &=2cA\rho^{2}(t_{0})\dot{f}(t_{0}). 
 \end{align*}
In the fourth equality, we use $\langle \mu(\rho\cdot p_{0}), \xi \rangle=\rho^2 \langle \mu( p_{0}), \xi \rangle$ for a $\rho\in\mathbb{R}^{+}$ action and 
it follows by the definition of the moment map (\ref{momentmap}). 
In the last equality, remember that for $p_{0}$ in $C(S)^{\sigma}_{\zeta,c}$ (now $\zeta=\xi$ by the assumption) $\langle \mu(p_{0}) , \zeta \rangle =c $ by the definition of $C(S)^{\sigma}_{\zeta,c}$. 
By the equality (\ref{equation1}), we know that $2\rho^{2}(t_{0})\dot{f}(t_{0})$ is the imaginary part of $2\overline{c(t_{0})}\dot{c}(t_{0})$, 
and using the equality (\ref{equation1-1}) we show that 
$$2\rho^{2}(t_{0})\dot{f}(t_{0})=2\rho^{N}(t_{0})\sin(\theta(t_{0})-\theta-f(t_{0})N)$$
Thus we have 
\begin{align*}
\omega(2cH^{g}, F_{*} \frac{\partial}{\partial t})=\omega(A\overrightarrow{F},F_{*}\frac{\partial}{\partial t}). 
\end{align*}
This means that $2cH^{g}=A {\overrightarrow{F}}^{\bot}$. 
\end{proof}

\begin{remark}\label{JLT123}
Here we assume that all $\xi^{j}\neq0$. 
If we define curves $c_{j}:I\rightarrow\mathbb{C}^{*}$ by 
$$c_{j}(t):=\rho^{\xi^{j}}(t)e^{i(f(t)\xi^{j}+\nu^{j})}, $$
then the differential equations (\ref{LagS2-0}) in Theorem \ref{11223} 
are equivalent to the following differential equations. 
\begin{align}\label{LagS1-0}
\begin{cases}
 \frac{d}{dt}c_{j}^{1/\xi^{j}}(t)=e^{i\theta(t)}\overline{c_{1}^{\gamma_{1}}(t) \cdots c_{j}^{\gamma_{j}-1/\xi^{j}}(t) \cdots  c_{m}^{\gamma_{m}}(t)} \quad (j=1,\dots,m)\\
 \frac{d}{dt}\theta(t)=A\, \mathrm{Im}(e^{-i\theta(t)}c_{1}^{\gamma_{1}}(t) \cdots c_{m}^{\gamma_{m}}(t)). 
\end{cases}
\end{align}
For example in $\mathbb{C}^{m}$, the canonical Reeb field is $\xi=(1,\dots,1)$ and $\gamma=(1,\dots,1)$. 
Then if we take $\theta_{0}=0$ and $\nu^{1}=\dots=\nu^{m}=0$ for example, 
then the above equality (\ref{LagS1-0}) becomes 
\begin{align*}
\begin{cases}
 \frac{d}{dt}c_{j}(t)=e^{i\theta(t)}\overline{c_{1}(t) \cdots c_{j-1}(t)\cdot c_{j+1}(t) \cdots  c_{m}(t)} \quad (j=1,\dots,m)\\
 \frac{d}{dt}\theta(t)=A\, \mathrm{Im}(e^{-i\theta(t)}c_{1}(t) \cdots c_{m}(t)), 
\end{cases}
\end{align*}
and the image of $F:L_{\zeta,c}\rightarrow \mathbb{C}^{m}$ coincides with
$$\{\, (c_{1}(t)x^{1},\dots,c_{m}(t)x^{m}) \in \mathbb{C}^{m} \mid t\in I,\, x^{j}\in\mathbb{R},\, (x^1)^2+\dots+(x^m)^2=c\,\}. $$ 
This differential equations appear in Theorem A in the paper of Joyce, Lee and Tsui \cite{JoyceLeeTsui}. 
Hence this is one of extension of the paper of Joyce, Lee and Tsui in $\mathbb{C}^{m}$ to the toric almost Calabi--Yau cone.
\end{remark}

\section{Examples}\label{TKC}
In this section, we apply the theorems and construct some concrete examples of special Lagrangians and Lagrangian self-similar solutions. 
As explained in Remark \ref{remarkSasaki} in Section \ref{TS}, the moment image of a toric K\"ahler cone is a strongly convex good rational polyhedral cone. 
Conversely, we can construct a toric K\"ahler cone from a strongly convex good rational polyhedral cone by the Delzant construction. 

Let 
$$\Delta = \{\, y \in \mathfrak g^{\ast} \mid \langle y, \lambda_i \rangle \geq 0\ \mathrm{for\ }\ i = 1, \cdots, d \,\}-\{0\}$$
be a strongly convex good rational polyhedral cone and put the (open) dual cone
$$\Delta_{0}^{*}=\{\, \xi \in \mathfrak{g} \mid \langle v,\xi\rangle >0\ \mathrm{for\ all}\ v \in \Delta \,\}. $$
\begin{proposition}\label{Delzant}
For $\Delta$ and $\xi \in \Delta_{0}^{*}$, there exists a compact connected toric Sasaki manifold $(S,g)$ whose moment image is 
equal to $\Delta$ and whose Reeb vector field is generated by $\xi$. 
\end{proposition}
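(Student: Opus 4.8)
The plan is to carry out the Delzant construction for K\"ahler cones, following the now-standard treatment (see Martelli--Sparks--Yau \cite{MartelliSparksYau} or Lerman \cite{Lerman}), and then to use $\xi$ to single out the Sasaki structure inside the resulting K\"ahler cone. First I would set up the symplectic reduction. Since $\Delta$ is a strongly convex good rational polyhedral cone with generators $\Lambda=\{\lambda_1,\dots,\lambda_d\}\subset\mathbb Z_{\mathfrak g}$, I take the surjection $\beta:\mathbb R^d\to\mathfrak g$, $e_i\mapsto\lambda_i$, which by primitivity and the good condition descends to a surjection of the integral lattices $\mathbb Z^d\to\mathbb Z_{\mathfrak g}$; let $\mathfrak k=\ker\beta$, a $(d-m)$-dimensional subspace defined over $\mathbb Q$, and let $K\subset T^d$ be the corresponding connected subtorus (closedness of $K$ is exactly where the rationality/goodness of $\Delta$ enters). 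Then $K$ acts on $\mathbb C^d$ preserving the flat K\"ahler form $\omega_0=\frac i2\sum dz^k\wedge d\bar z^k$, with moment map $\mu_K$ the composition of the standard moment map $(z^k)\mapsto\frac12(|z^k|^2)$ with the transpose of the inclusion $\mathfrak k\hookrightarrow\mathbb R^d$; I perform the K\"ahler quotient $C(S):=(\mu_K^{-1}(0)\setminus\{0\})/K$. Deleting the origin and the coordinate subspaces prescribed by the combinatorics of $\Delta$ ensures $K$ acts freely, so $C(S)$ is a smooth K\"ahler manifold of complex dimension $m$, carrying a residual effective Hamiltonian $T^m=T^d/K$-action whose moment map image is precisely $\Delta$ (this is the standard identification of the quotient moment polytope with the annihilator picture).

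Next I would check that $C(S)$ is a metric cone. The key point is that the Euler vector field $\frac12\sum(x^k\partial_{x^k}+y^k\partial_{y^k})$ on $\mathbb C^d$ is $K$-invariant and descends to a vector field on $C(S)$ whose flow scales $\omega_0$ — more precisely, the flat cone structure $\overline g_0=|dz|^2$ on $\mathbb C^d\setminus\{0\}$ is $\mathbb R^+\times T^d$-invariant and the reduction inherits a Riemannian cone structure $\overline g=dr^2+r^2g$ with the $\mathbb R^+$-action of Definition \ref{raction}; equivalently, $C(S)\cong S\times\mathbb R^+$ where $S$ is the link at level $r=1$, and $\omega=\frac12 dd^c r^2$ as in (\ref{kahler-form}). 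Here $r^2$ is the push-down of $|z|^2$ restricted to $\mu_K^{-1}(0)$, appropriately normalized. At this stage $(C(S),\overline g,J)$ is a toric K\"ahler cone in the sense of Definition \ref{ToricSasaki}(b), hence its link $S$ is a toric Sasaki manifold of dimension $2m-1$; compactness and connectedness of $S$ follow since $\Delta$ is strongly convex (so $\mu_K^{-1}(0)\cap\{|z|^2=1\}$ is compact and $K$ acts on it, and $C(S)$ is connected because $\mathbb C^d$ minus the relevant subvarieties is).

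The remaining point is to arrange the Reeb field. The cone constructed above comes with the ``canonical'' Reeb field, but Proposition \ref{Delzant} demands an arbitrary $\xi\in\Delta_0^*$. The standard device is a \emph{deformation of the Sasaki structure in the same K\"ahler cone}: the complex manifold $C(S)$ and its $T^m_{\mathbb C}$-action are fixed by $\Delta$ alone, and given $\xi\in\Delta_0^*\cap\mathbb Z_{\mathfrak g}$ one replaces $r$ by a new radial function $r_\xi$ defined (on the open dense orbit, in logarithmic coordinates) so that the new Euler field $r_\xi\partial_{r_\xi}$ generates the $\mathbb R^+$ inside the complexification of the $1$-parameter subgroup determined by $\xi$; concretely one solves for a $T^m$-invariant K\"ahler potential whose associated homothety is $\xi$, which is possible exactly because $\langle y,\xi\rangle>0$ on all of $\Delta$ guarantees the relevant Legendre-type function is well defined and convex. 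For irrational $\xi\in\Delta_0^*$ one approximates, or rather one notes the construction works verbatim for any $\xi$ in the Reeb cone, the positivity $\langle\mu(p),\xi\rangle=\frac12 r_\xi^2(p)>0$ being the only requirement (compare Remark \ref{remarkSasaki}). One then verifies that $\mu$ is unchanged as a map to $\mathfrak g^*$, so the moment image is still $\Delta$, and that by construction the Reeb field of the new Sasaki structure is $\xi$. I expect the main obstacle to be this last step — making precise that one may freely prescribe the Reeb field within $\Delta_0^*$ while keeping the moment image equal to $\Delta$ and preserving compactness and smoothness of $S$; the smoothness is automatic since the complex manifold and the torus action are untouched, but checking that the rescaled radial function genuinely defines a cone metric (positivity and completeness away from the apex) requires the convexity argument sketched above. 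Since all of these facts are established in \cite{BoyerGalicki}, \cite{MartelliSparksYau} and \cite{Lerman}, I would organize the proof as a reference-guided assembly of the Delzant construction plus the Reeb deformation, rather than reproving each ingredient in detail.
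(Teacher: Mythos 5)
Your proposal is correct and follows exactly the route the paper takes: the paper proves Proposition \ref{Delzant} simply by invoking the Delzant construction with references to \cite{Lerman} and \cite{MartelliSparksYau2}, and your sketch (K\"ahler reduction of $\mathbb{C}^d$ by the subtorus $K=\ker\beta$, goodness ensuring a free action and smooth quotient, descent of the Euler field giving the cone structure, and the deformation of the radial function within the fixed complex cone to realize an arbitrary Reeb field $\xi\in\Delta_0^*$) is precisely the content of those references. The only minor imprecision is the phrase about deleting ``coordinate subspaces'': one only removes the origin from $\mu_K^{-1}(0)$, with goodness guaranteeing freeness there, but this does not affect the argument.
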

This proposition is proved by the Delzant construction, for details see \cite{Lerman} and \cite{MartelliSparksYau2}. 
Of course the cone $(C(S),\overline{g})$ of $(S,g)$ is a toric K\"ahler manifold whose moment image is equal to $\Delta$. 

As mentioned in Remark \ref{trivial} in Section \ref{TS}, the canonical line bundle $K_{C(S)}$ is trivial if and only if 
there exists an element $\gamma$ in $(\mathbb{Z}_{\mathfrak{g}})^{*}\cong\mathbb{Z}^{m}$ such that 
$\langle \gamma, \lambda_{j} \rangle=1$ for all $j=1,\dots, d$, 
and using $\gamma$ we can construct a non-vanishing holomorphic $(m,0)$-form $\Omega_{\gamma}$ that is written by  
\begin{align}\label{Omega}
\Omega_{\gamma}=\exp(\gamma_{1}z^{1}+\dots\gamma_{m}z^{m})dz^{1}\wedge\dots\wedge dz^{m}
\end{align}
on an open dense $T^{m}_{\mathbb{C}}$-orbit by the logarithmic holomorphic coordinates. 
This condition is called the {\it height 1} and in fact there exists a definition of the {\it height $\ell$} for some $\ell\in\mathbb{Z}$, for example see Cho-Futaki-Ono \cite{ChoFutakiOno}. 
Here we want to introduce the results in \cite{ChoFutakiOno}.  
\begin{theorem}[cf. Theorem 1.2 in \cite{ChoFutakiOno}]\label{CFO1}
 Let $S$ be a compact toric Sasaki manifold
 with $c_{1}^{B} > 0$ and $c_{1}(D) = 0$. Then by
deforming the Sasaki structure varying  the Reeb vector field, we obtain a Sasaki-Einstein structure.
\end{theorem}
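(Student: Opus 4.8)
The plan is to follow the strategy of Martelli--Sparks--Yau and Futaki--Ono--Wang (as used in \cite{ChoFutakiOno}), working throughout with the transverse K\"ahler geometry of the characteristic foliation so that the irregular case is handled uniformly: first locate the correct Reeb field by a variational principle, then solve a transverse complex Monge--Amp\`ere equation. The hypothesis $c_{1}(D)=0$ is precisely the condition that $C(S)$ carries a nowhere-vanishing holomorphic $(m,0)$-form; in the toric language of Remark~\ref{trivial} this is the lattice element $\gamma$ with $\langle\gamma,\lambda\rangle=1$ for all $\lambda\in\Lambda$, together with the form $\Omega_{\gamma}$. Combined with $c_{1}^{B}>0$, a $D$-homothetic transformation (a transverse rescaling of $\eta$) lets one arrange that the basic first Chern class is a positive multiple of the transverse K\"ahler class $\tfrac12[d\eta]_{B}$. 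After this normalization one is seeking a Reeb field $\xi\in\Delta_{0}^{*}$ with $\langle\gamma,\xi\rangle=m$ (so that $\xi$ acts on $\Omega_{\gamma}$ with weight $m$), together with a transverse K\"ahler metric whose transverse Ricci form equals $2m$ times its transverse K\"ahler form; such a transverse K\"ahler--Einstein metric is equivalent, after a final $D$-homothety fixing the Einstein constant at $2m-2$, to a Sasaki--Einstein structure on $S$, i.e. to $C(S)$ being a Ricci-flat K\"ahler --- that is, Calabi--Yau --- cone in the sense of this paper.

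Next I would pin down the Reeb field. On the slice $\{\,\xi\in\Delta_{0}^{*}\mid\langle\gamma,\xi\rangle=m\,\}$ the Einstein--Hilbert functional --- equivalently, up to a positive constant, the volume functional $\mathrm{Vol}(\xi)$ of Martelli--Sparks--Yau, which in the toric case is an explicit integral over the cross-section of $\Delta$ determined by $\xi$ --- is strictly convex and proper, blowing up as $\xi$ approaches $\partial\Delta_{0}^{*}$; hence it attains its minimum at a unique $\xi_{0}$. The key structural input, due to Futaki--Ono--Wang, is that the derivative of this functional along a Reeb deformation is a nonzero multiple of the transverse Futaki invariant paired with the corresponding Hamiltonian, so at $\xi_{0}$ the transverse Futaki invariant vanishes identically. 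Fix $\xi=\xi_{0}$ from now on.

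With $\xi_{0}$ fixed I would solve the transverse complex Monge--Amp\`ere equation by the Aubin--Yau continuity method adapted to the foliated setting by El Kacimi-Alaoui and Futaki--Ono--Wang: along the path
\[
\frac{(\omega^{T}+i\partial_{B}\bar\partial_{B}\varphi)^{m-1}}{(\omega^{T})^{m-1}}=e^{F-t\varphi},\qquad t\in[0,1],
\]
with $F$ the transverse Ricci potential of $\omega^{T}$, the $t=0$ equation is transverse Calabi--Yau and solvable, openness along the path is the standard implicit-function argument, and the higher-order a priori estimates reduce --- via the transverse versions of Yau's estimates --- to a uniform $C^{0}$ bound on $\varphi$ at $t=1$. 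I would obtain that bound by using the torus symmetry to reduce the equation to a real Monge--Amp\`ere equation on the cross-sectional moment polytope and then running the Wang--Zhu argument from the toric Fano case; the vanishing of the transverse Futaki invariant --- equivalently, the barycenter of the polytope lying at the distinguished point --- is exactly what makes that argument close. A solution $\varphi$ produces the desired transverse K\"ahler--Einstein metric, hence, after the final $D$-homothety, a Sasaki--Einstein structure on $S$ with the same underlying CR structure but Reeb field $\xi_{0}$.

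The main obstacle is this last $C^{0}$ estimate for the transverse Monge--Amp\`ere equation at the endpoint of the continuity path: unlike the non-positive cases there is a genuine analytic obstruction, and the entire point of first choosing $\xi_{0}$ by volume minimization (to kill the transverse Futaki invariant) and of exploiting the toric symmetry (so the Wang--Zhu polytope estimate applies) is to defeat it. The remaining ingredients --- the normalization, the convexity and properness of the volume functional, openness along the path, and the transverse a priori estimates --- are either elementary or direct transcriptions of well-established K\"ahler arguments to the basic/foliated setting.
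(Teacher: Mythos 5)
This statement is not proved in the paper at all: it is imported verbatim (as ``cf.\ Theorem 1.2 in \cite{ChoFutakiOno}'') and used as a black box, so there is no in-paper argument to compare against. Your outline is a faithful reconstruction of the actual proof in the cited literature (Cho--Futaki--Ono, resting on Futaki--Ono--Wang and Martelli--Sparks--Yau): normalization by $D$-homothety, volume minimization over the Reeb cone to kill the transverse Futaki invariant, and the continuity method for the transverse Monge--Amp\`ere equation with the Wang--Zhu $C^{0}$ estimate on the moment polytope; the constants ($\langle\gamma,\xi\rangle=m$, $\rho^{T}=2m\,\omega^{T}$, Einstein constant $2m-2$) are right. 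It is of course a roadmap rather than a self-contained proof --- the properness/convexity of the volume functional, openness, and the transverse a priori estimates are all deferred to the references --- but that is the appropriate level of detail for a quoted theorem.
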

 We do not explain the meanings of $c_{1}^{B} $ and $c_{1}(D)$ in this paper, but in \cite{ChoFutakiOno} it is proved that 
 the condition with $c_{1}^{B} > 0$ and $c_{1}(D) = 0$ is equivalent to the {\it height $\ell$} for some $\ell\in\mathbb{Z}$. 
 Note that $(S,g)$ is Sasaki-Einstein if and only if $(C(S),\omega)$ is Ricci flat. 
 Thus, if we use Theorem \ref{CFO1}, then we get a toric Calabi--Yau cone $(C(S),\omega,\Omega_{\gamma})$ rather than {\it almost} Calabi--Yau . 
 The merit of using the toric Calabi--Yau is that $H^{g}$ coincides with $H$. 
 
From now on, we restrict ourselves to the case of $\dim_{\mathbb{C}}C(S)=3$. 
There is a useful proposition (c.f. \cite{ChoFutakiOno} ) to check whether given inward conormal vectors $\lambda_{i}$ satisfy the goodness condition (\ref{goodcondition}) of Definition \ref{good}. 
\begin{proposition}\label{gooddiagram}
Let $\Delta$ be a strongly convex rational polyhedral cone in $\mathbb{R}^3$ given by
$$ \Delta= \{\, y \in \mathbb{R}^3 \mid  \langle y, \lambda_i\rangle \geq 0,\ j = 1, \cdots, d\, \}-\{0\}$$
$$ \lambda_1 = \left(\begin{array}{c} 1 \\ p_1 \\ q_1\end{array}\right), \cdots,
\lambda_d = \left(\begin{array}{c} 1 \\ p_d \\ q_d\end{array}\right).$$
Then $\Delta$ is good in the sense of Definition \ref{good} if and only if
either
\begin{enumerate}
\item$|p_{i+1}- p_i | = 1 $ or
\item$|q_{i+1} - q_i| = 1$ or
\item$p_{i+1}- p_i $ and $q_{i+1} - q_i$ are relatively prime non-zero integers 
\end{enumerate}
for $i = 1,\ \cdots, d$ where we have put $\lambda_{d+1} = \lambda_1$.
\end{proposition}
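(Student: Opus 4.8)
The plan is to reduce the goodness condition of Definition \ref{good} to a purely combinatorial statement about consecutive pairs $(\lambda_i,\lambda_{i+1})$, and then to recognize that condition as the stated gcd/difference-by-one trichotomy. First I would observe that because every $\lambda_i$ has first coordinate $1$, the cone $\Delta$ is a cone over a two-dimensional polytope (slice at height $1$ in the dual), so its faces of positive dimension meeting the interior are exactly the facets $\{\langle y,\lambda_i\rangle=0\}$ and the edges (rays) $\{\langle y,\lambda_i\rangle=\langle y,\lambda_{i+1}\rangle=0\}$ for cyclically consecutive indices, after relabeling so that $\lambda_1,\dots,\lambda_d$ occur in cyclic order around the polygon. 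For a single facet the subset $\Lambda'=\{\lambda_i\}$ is trivially $\mathbb{Z}$-independent and spans a rank-one sublattice saturated by primitivity of $\lambda_i$; for the empty intersection there is nothing to check; and for a $2$-element $\Lambda'=\{\lambda_i,\lambda_{i+1}\}$ coming from an edge, the nonemptiness hypothesis in Definition \ref{good} is automatic. So the content of goodness is precisely: for each $i$, the pair $\{\lambda_i,\lambda_{i+1}\}$ is $\mathbb{Z}$-linearly independent and the rank-two sublattice $\mathrm{Span}_{\mathbb{R}}\{\lambda_i,\lambda_{i+1}\}\cap\mathbb{Z}^3$ equals $\mathbb{Z}\lambda_i+\mathbb{Z}\lambda_{i+1}$, i.e.\ $\{\lambda_i,\lambda_{i+1}\}$ extends to a $\mathbb{Z}$-basis of the lattice plane they span.

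Next I would translate that lattice condition into arithmetic. Since $\lambda_{i+1}-\lambda_i=(0,\,p_{i+1}-p_i,\,q_{i+1}-q_i)$, the sublattice $\mathbb{Z}\lambda_i+\mathbb{Z}\lambda_{i+1}$ equals $\mathbb{Z}\lambda_i+\mathbb{Z}(0,a,b)$ with $a=p_{i+1}-p_i$, $b=q_{i+1}-q_i$; $\mathbb{Z}$-independence of the pair is exactly $(a,b)\neq(0,0)$. The index of this sublattice inside its saturation $\mathrm{Span}_{\mathbb{R}}\{\lambda_i,\lambda_{i+1}\}\cap\mathbb{Z}^3$ is the gcd of the $2\times 2$ minors of the matrix with rows $\lambda_i$ and $(0,a,b)$; those minors are $ b\cdot 1 - 0 = b$ wait—more precisely the minors are $1\cdot b - q_i\cdot 0 = b$, $1\cdot a - p_i\cdot 0 = a$, and $p_i b - q_i a$, so the index is $\gcd(a,\,b,\,p_i b-q_i a)=\gcd(a,b)$ because $p_i b - q_i a$ is an integer combination of $a$ and $b$. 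Hence goodness of the pair $\{\lambda_i,\lambda_{i+1}\}$ is equivalent to $(a,b)\neq(0,0)$ together with $\gcd(a,b)=1$. Finally I would unpack $\gcd(a,b)=1$ with $(a,b)\neq(0,0)$ into the three listed cases: if $a=0$ then we need $|b|=1$, which is case (2); if $b=0$ then $|a|=1$, case (1); and if both are nonzero then $\gcd(a,b)=1$ says $a,b$ are relatively prime nonzero integers, case (3). Running this over all $i=1,\dots,d$ (with $\lambda_{d+1}=\lambda_1$) gives the claimed equivalence.

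The main obstacle I anticipate is the combinatorial bookkeeping in the first step: justifying that, for a cone over a polygon, the only faces that can appear as $\{y\in\Delta\mid\langle y,\lambda\rangle=0,\ \lambda\in\Lambda'\}$ with $\Lambda'$ of size $\geq 2$ are the edges given by \emph{cyclically consecutive} pairs, and that three or more of the $\lambda_i$ never have a common zero in $\Delta$ other than $0$. This requires the hypothesis that the $\lambda_i$ have been ordered cyclically and that $\Lambda$ is minimal (no redundant inequalities) and $\Delta$ is strongly convex, so that the slice is a genuine $d$-gon with each $\lambda_i$ supporting exactly one edge; once that normalization is in place the rest is the elementary $2\times 2$ minor computation above. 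I would also note that one should record that primitivity of each $\lambda_i$ is what makes the one-element subsets automatically good, so that the whole condition really does collapse to the pairwise statement.
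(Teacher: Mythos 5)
Your argument is correct. Note first that the paper itself gives no proof of Proposition \ref{gooddiagram}; it is quoted from Cho--Futaki--Ono \cite{ChoFutakiOno}, so there is no in-paper proof to compare against. Your route is the standard one and is essentially what appears in that reference: since every $\lambda_i$ has first coordinate $1$, the dual cone is the cone over the polygon $P=\mathrm{conv}\{(p_i,q_i)\}$, minimality of $\Lambda$ makes each $(p_i,q_i)$ a vertex, and because the apex is deleted from $\Delta$ the only subsets $\Lambda'$ with nonempty zero locus are singletons (handled by primitivity) and cyclically consecutive pairs; the goodness of a pair $\{\lambda_i,\lambda_{i+1}\}$ then reduces, via the Smith normal form fact that the index of a rank-two row lattice in its saturation is the gcd of the $2\times 2$ minors, to $\gcd(p_{i+1}-p_i,\,q_{i+1}-q_i)=1$ with the difference vector nonzero, which is exactly the stated trichotomy. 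The one step worth writing out explicitly is the Smith-normal-form index computation (the claim that $[\,\mathrm{Span}_{\mathbb{R}}\{\lambda_i,\lambda_{i+1}\}\cap\mathbb{Z}^3 : \mathbb{Z}\lambda_i+\mathbb{Z}\lambda_{i+1}\,]$ equals the gcd of the maximal minors), and the observation that the reduction of that gcd to $\gcd(a,b)$ uses $p_ib-q_ia\in a\mathbb{Z}+b\mathbb{Z}$; also tidy up the mid-sentence self-correction in the minor computation before submitting.
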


\begin{example}\label{ex1}
Take an integer $g\geq 1$. 
If $g=1$, let $\Delta$ be the strongly convex rational polyhedral cone defined by 
\begin{align*}
\Delta=\Delta_{1}=\{\, y \in \mathbb{R}^3 \mid \langle y , \lambda_{i}\rangle \geq 0,\, i=1,2,3,4\,\}-\{0\}
\end{align*}
with 
\begin{align*}
\lambda_{1}:=\begin{pmatrix} 1 \\ -1 \\ -1 \end{pmatrix},~
\lambda_{2}:=\begin{pmatrix} 1 \\ 0 \\ -1 \end{pmatrix},~
\lambda_{3}:=\begin{pmatrix} 1 \\ 1 \\ 0 \end{pmatrix},~
\lambda_{4}:=\begin{pmatrix} 1 \\ 2 \\ 3 \end{pmatrix}. 
\end{align*}
If $g\geqq2$ 
let $\Delta$ be the strongly convex rational polyhedral cone defined by 
\begin{align*}
\Delta=\Delta_{g}=\{\, y \in \mathbb{R}^3 \mid \langle y , \lambda_{i}\rangle \geq 0,~i=1,\dots,g+3\,\}-\{0\}
\end{align*}
with 
\begin{align*}
\lambda_{1}:=\begin{pmatrix} 1 \\ -1 \\ -1 \end{pmatrix},~
\lambda_{k}:=\begin{pmatrix} 1 \\ k-2 \\ (k-2)^2-1 \end{pmatrix}~(k=2,3,\dots,g+2),~
\lambda_{g+3}:=\begin{pmatrix} 1 \\ -2 \\ g^2 \end{pmatrix},~
\end{align*}
Then by Proposition \ref{gooddiagram}, $\Delta$ is a strongly convex {\it good} rational polyhedral cone.
Since we can take $\gamma$ as $(1,0,0)$ so that $\langle \gamma, \lambda_{j} \rangle =1$ for $j=1,\dots , g+3$, 
this condition satisfies the {\it height 1} and we can use Theorem \ref{CFO1}. 
Let $(C(S),\omega)$ be a toric K\"ahler manifold whose moment image is equal to $\Delta$. 
The existence of it is guaranteed by Proposition \ref{Delzant}. 
If necessary, we deform the K\"ahler form $\omega$ and Reeb field $\xi$ on $C(S)$ so that $(C(S),\omega)$ is Ricci flat by Theorem \ref{CFO1}. 
Thus we can assume that $(C(S),\omega)$ is Ricci flat. 
Furthermore, since we can take $\gamma$ as above, the canonical line bundle $K_{C(S)}$ is trivial and we have a 
non-vanishing holomorphic $(3,0)$-form $\Omega_{\gamma}$ on $C(S)$. 
Thus we have a Calabi--Yau cone $M_{g}=(C(S),\omega,\Omega_{\gamma})$ and denote its Reeb field by $\xi$. 

For example, if we take 
$$c:=\frac{1}{2}\langle \gamma,\xi\rangle \quad \mathrm{and} \quad \zeta:=\xi, $$ 
then $\zeta$ and $c$ satisfy the assumptions (\ref{ass1}) and (\ref{ass2}) in Section \ref{CL}, 
which proved in Proposition \ref{app1} in Appendix \ref{app}. 
Then the shape of $\Delta_{\zeta,c}=\Delta\cap H_{\zeta,c}$ is a $(g+3)$-gon, 
which proved in Proposition \ref{app2} in Appendix \ref{app}. 
For example if $g=1$ then $\Delta_{\zeta,c}$ is a quadrilateral and if $g=2$ then $\Delta_{\zeta,c}$ is a pentagon. 

Remember that $\mu^{\sigma}$, the restriction of the moment map $\mu$ to the real form $C(S)^{\sigma}$, is a $2^{3}(=8)$-fold covering of $\Delta$, 
and we have defined $C(S)^{\sigma}_{\zeta,c}=(\mu^{\sigma})^{-1}(\Delta_{\zeta,c})$. 
Hence the topological shape of the $C(S)^{\sigma}_{\zeta,c}$ is a 2-dimensional surface constructed from $8$-copies of $\Delta_{\zeta,c}$ that is glued with certain boundaries. 
In this setting, we can see that 
$$C(S)^{\sigma}_{\zeta,c}\cong\Sigma_{g}, $$
where $\Sigma_{g}$ is a closed surface of genus $g$. 
This will be explained in Proposition \ref{app3} in Appendix \ref{app}.

{\bf Special Lagrangian. }
First we construct special Lagrangian submanifolds using Theorem \ref{12345}.  
Now $N=\langle \gamma, \zeta \rangle>0$. 
For example take $\theta_{0}=0$.  
Then, for example, take an open interval $I=(0,\pi)$, and define $f:I\rightarrow\mathbb{R}$ and $\rho:I\rightarrow\mathbb{R}^{+}$ by 
$$f(t)=\frac{1}{N}t\quad \mathrm{and} \quad \rho(t)=\biggl(\frac{1}{\sin t}\biggr)^{1/N}, $$
and take $\tau_{0}=(e^{i\nu^{1}},e^{i\nu^{2}},e^{i\nu^{3}})$ in $T^3$ as $\nu^{1}=\nu^{2}=\nu^{3}=0$. 
Then $\theta=\gamma_{1}\nu^{1}+\gamma_{2}\nu^{2}+\gamma_{3}\nu^{3}=0$. 
This setting satisfies the equality (\ref{123456}). 
Thus $F:L_{\zeta,c}\rightarrow M_{g}$ is a special Lagrangian submanifold and $L_{\zeta,c}$ is diffeomorphic to
$$L_{\zeta,c}\cong\Sigma_{g}\times\mathbb{R}. $$
Note that of course the map $F$ and $L_{\zeta,c}$ depend on $g$, 
and in Example \ref{ex-intro} we denote these by $F^{1}_{g}:L^{1}_{g}\rightarrow M_{g}$. 

{\bf Lagrangian self-similar solution. }
Next we construct Lagrangian (generalized) self-similar solutions using Theorem \ref{11223}. 
Now $N=\langle \gamma, \zeta \rangle>0$. 
For example take 
$$\theta(t)=Nt+\frac{\pi}{2}\quad \mathrm{and} \quad A=-N. $$  
Then, for example, take an interval $I=\mathbb{R}$, and define $f:I\rightarrow\mathbb{R}$ and $\rho:I\rightarrow\mathbb{R}^{+}$ by 
$$f(t)=t\quad \mathrm{and} \quad \rho(t)=1, $$
and take $\tau_{0}=(e^{i\nu^{1}},e^{i\nu^{2}},e^{i\nu^{3}})$ in $T^3$ as $\nu^{1}=\nu^{2}=\nu^{3}=0$. 
Then $\theta=\gamma_{1}\nu^{1}+\gamma_{2}\nu^{2}+\gamma_{3}\nu^{3}=0$. 
This setting satisfies the differential equations (\ref{LagS2-0}). 
Thus $F:L_{\zeta,c}\rightarrow C(S)$ is a Lagrangian self-similar solution (self-shrinker). 
Furthermore as mentioned in Remark \ref{S1}, we can reduce $I$ to $S^1$, hence we have 
a {\it compact} Lagrangian self-shrinker $F:L_{\zeta,c}\rightarrow M_{g}$ with 
$$H^{g}=-{\overrightarrow{F}}^{\bot}$$
which is diffeomorphic to 
$$L_{\zeta,c}\cong\Sigma_{g}\times S^{1}. $$
Note that of course the map $F$ and $L_{\zeta,c}$ depend on $g$, 
and in Example \ref{ex-intro} we denote these by $F^{2}_{g}:L^{2}_{g}\rightarrow M_{g}$. 
\end{example}

\begin{remark}
In $M_{g}(=C(S))$ constructed above, it is clear that the real form $C(S)^{\sigma}$ itself is one of the most typical examples of special Lagrangian submanifold in $C(S)$, and it is a cone. 
Hence $C(S)^{\sigma}$ is also diffeomorphic to $\Sigma_{g}\times\mathbb{R}$. 
However the above example $F^{1}_{g}:L^{1}_{g}\rightarrow M_{g}$ is different from the real form itself, especially it dose not have a cone shape. 
\end{remark}

\appendix
\section{}\label{app}
In this appendix, we give some proofs for the statements mentioned in Example \ref{ex1} in Section \ref{TKC}. 
\begin{proposition}\label{app1}
$\zeta$ and $c$ in Example \ref{ex1} satisfy the assumptions (\ref{ass1}) and (\ref{ass2}) in Section \ref{CL}. 
\end{proposition}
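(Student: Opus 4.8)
The plan is to reduce both assumptions to a single fact recorded in Remark \ref{remarkSasaki}: the Reeb field $\xi$ lies in the open dual cone $\Delta_{0}^{*}$, i.e. $\langle y,\xi\rangle>0$ for every $y\in\Delta$. Combined with the observation that $\gamma$ (which equals $(1,0,0)$ in Example \ref{ex1}) is an interior point of $\Delta$, everything follows quickly.

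First I would record the two preliminary facts. Since $\langle\gamma,\lambda_{i}\rangle=1>0$ for every $i$ and the defining set $\Lambda$ is minimal, $\gamma\in\mathrm{Int}\Delta$; hence $\tfrac12\gamma\in\mathrm{Int}\Delta$ as well, because the interior of a polyhedral cone is stable under multiplication by positive reals. Moreover, because $\gamma\in\Delta$ and $\xi\in\Delta_{0}^{*}$, the constant $c=\tfrac12\langle\gamma,\xi\rangle$ is strictly positive.

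Next I verify (\ref{ass1}). The point $y_{0}:=\tfrac12\gamma$ lies in $\mathrm{Int}\Delta$ and satisfies $\langle y_{0},\zeta\rangle=\langle y_{0},\xi\rangle=\tfrac12\langle\gamma,\xi\rangle=c$, so $y_{0}\in\mathrm{Int}\Delta\cap H_{\zeta,c}$, which is therefore non-empty. For (\ref{ass2}), I argue by contradiction: if $\zeta=\xi\in\mathfrak{z}_{y}$ for some $y\in\Delta\cap H_{\zeta,c}$, write $\xi=\sum_{i:\langle y,\lambda_{i}\rangle=0}a_{i}\lambda_{i}$ with $a_{i}\in\mathbb{R}$. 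Pairing with $y$ gives $\langle y,\xi\rangle=\sum_{i}a_{i}\langle y,\lambda_{i}\rangle=0$, contradicting $\langle y,\xi\rangle=c>0$. Hence $\xi\notin\mathfrak{z}_{y}$ for all such $y$.

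There is essentially no hard step here; the only point needing a word of care is the standard fact that $\mathrm{Int}\Delta=\{\,y\mid\langle y,\lambda_{i}\rangle>0\text{ for all }i\,\}$, i.e.\ that minimality of $\Lambda$ eliminates redundant facets, which is what lets me conclude $\gamma\in\mathrm{Int}\Delta$ from the height~$1$ condition. Everything else is immediate from $\xi\in\Delta_{0}^{*}$ and $c>0$.
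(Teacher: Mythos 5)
Your proof is correct and follows essentially the same route as the paper: exhibit $\tfrac12\gamma$ as a point of $\mathrm{Int}\Delta\cap H_{\zeta,c}$ for (\ref{ass1}), and for (\ref{ass2}) derive a contradiction between $\langle y,\mathfrak{z}_{y}\rangle=0$ and the positivity of $\langle y,\xi\rangle$ (the paper gets the latter directly from $\xi\in\Delta_{0}^{*}$, you from $y\in H_{\zeta,c}$ and $c>0$ — an immaterial difference). Your extra remark justifying $\gamma\in\mathrm{Int}\Delta$ is a harmless elaboration of what the paper treats as clear.
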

\begin{proof}
First, it is clear that $\frac{1}{2}\gamma$ is in $\mathrm{Int}\, \Delta$ and it is also in $H_{\zeta,c}$. 
This proves that $\zeta$ and $c$ satisfy the assumption (\ref{ass1}). 
Next we prove that $\zeta$ and $c$ satisfy the assumption (\ref{ass2}) by the proof of contradiction. 
Assume that there exists $y$ in $\Delta\cap H_{\zeta,c}$ such that $\zeta$ is in $\mathfrak{z}_{y}$.  
Here remember that 
$$\mathfrak{z}_{y}=\mathrm{Span}_{\mathbb{R}}\{\, \lambda_{j} \mid \langle y,\lambda_{j}\rangle=0 \,\}. $$
Since $y$ is in $\Delta$ and, as mentioned in Remark \ref{remarkSasaki}, the Reeb field $\xi$ is in $\Delta_{0}^{*}$, 
this means that $\langle y , \zeta \rangle =\langle y , \xi \rangle>0$. 
On the other hand, the pairing of $y$ and all elements in $\mathfrak{z}_{y}$ is zero. 
This is in contradiction to that $\zeta$ is in $\mathfrak{z}_{y}$.  
Thus we have proved that $\zeta$ and $c$ satisfy the assumption (\ref{ass2}). 
\end{proof}

\begin{proposition}\label{app2}
The shape of $\Delta_{\zeta,c}=\Delta\cap H_{\zeta,c}$ in Example \ref{ex1} is a $(g+3)$-gon. 
\end{proposition}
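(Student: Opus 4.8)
The plan is to reduce the statement to a purely combinatorial fact about the cone $\Delta$ — that it has exactly $g+3$ facets — and then to verify that fact by a short convexity computation in the plane.

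First I would exploit transversality of the slice. Since $\zeta=\xi$ lies in the open dual cone $\Delta_{0}^{*}$ (Remark \ref{remarkSasaki}), we have $\langle y,\zeta\rangle>0$ for every $y\in\Delta$ with $y\neq 0$; together with $c>0$ this shows that $\Delta_{\zeta,c}=\Delta\cap H_{\zeta,c}$ is a compact convex polygon, and that the radial rescaling $y\mapsto (c/\langle y,\zeta\rangle)\,y$ is a homeomorphism from $(\Delta\setminus\{0\})/\mathbb{R}^{+}$ onto $\Delta_{\zeta,c}$ which sends the $k$-dimensional faces of the cone $\Delta$ to the $(k-1)$-dimensional faces of $\Delta_{\zeta,c}$. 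In particular the number of edges of $\Delta_{\zeta,c}$ equals the number of facets of $\Delta$ (and this combinatorial type is independent of the particular transversal $\zeta\in\Delta_{0}^{*}$ and $c>0$). Hence it suffices to prove that $\Delta$ has exactly $g+3$ facets, i.e.\ that none of the defining inequalities $\langle y,\lambda_{i}\rangle\geq0$ is redundant.

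Next I would dualize. All the $\lambda_{i}$ have first coordinate $1$, hence lie on the affine plane $\{x^{1}=1\}$, so $\mathrm{Cone}(\lambda_{1},\dots,\lambda_{g+3})=\mathbb{R}^{+}\cdot\mathrm{conv}\{\lambda_{1},\dots,\lambda_{g+3}\}$. Because $\Delta$ is full-dimensional and strongly convex, cone duality identifies the facets of $\Delta$ with the extreme rays of this dual cone, i.e.\ with the vertices of the planar polygon $P:=\mathrm{conv}\{(p_{i},q_{i}):i=1,\dots,g+3\}$, where $\lambda_{i}=(1,p_{i},q_{i})$. Thus the proof comes down to checking that the $g+3$ points $(p_{i},q_{i})$ are in \emph{strictly} convex position, i.e.\ each is a vertex of $P$.

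Finally this is a direct computation. For $g=1$ one checks that the four points $(-1,-1)$, $(0,-1)$, $(1,0)$, $(2,3)$, taken in this cyclic order, make a turn of the same sign at each vertex, so that $P$ is a convex quadrilateral. For $g\geq2$, the points arising from $\lambda_{2},\dots,\lambda_{g+2}$ are precisely $(k-2,(k-2)^{2}-1)$ with $k=2,\dots,g+2$, i.e.\ the points with abscissae $0,1,\dots,g$ on the strictly convex parabola $q=p^{2}-1$; these are automatically in strictly convex position and form a convex subarc of $\partial P$. It then remains only to insert the two extra points $(-1,-1)$ (from $\lambda_{1}$) and $(-2,g^{2})$ (from $\lambda_{g+3}$) and to verify a consistent turn at the three vertices where these points meet the subarc — the turns at $\lambda_{1}$, at $\lambda_{g+2}$, and at $\lambda_{g+3}$ — each of which reduces to the positivity of a $2\times2$ determinant that is a polynomial in $g$ visibly positive for $g\geq1$. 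I expect the only mildly delicate part to be the bookkeeping of the cyclic order and of the signs of these determinants; everything else is formal.
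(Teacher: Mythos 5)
Your proposal is correct, and it takes a genuinely different route from the paper. The paper's proof is much shorter: for each $j$ it picks a point $y$ in the face $F_{j}=\{\,y\in\Delta\mid\langle y,\lambda_{j}\rangle=0\,\}$ and rescales it by $\kappa=c/\langle y,\zeta\rangle>0$ (positive because $\zeta=\xi\in\Delta_{0}^{*}$) to land in $H_{\zeta,c}$, concluding that the slicing hyperplane meets every facet of $\Delta$ and hence that $\Delta_{\zeta,c}$ has $g+3$ edges. That argument silently assumes what you make the centerpiece of your proof: that each $\lambda_{j}$ actually cuts out a nonempty two-dimensional facet, i.e.\ that none of the $g+3$ inequalities is redundant (the paper can lean on the minimality of $\Lambda$ built into Definition \ref{good}, but never checks it for the specific $\lambda_{i}$ of Example \ref{ex1}). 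Your proof supplies exactly this verification: after the radial-rescaling observation (which subsumes the paper's argument, since it identifies edges of the slice with facets of the cone), you dualize to reduce irredundancy to the strictly convex position of the points $(p_{i},q_{i})$ in the plane $\{x^{1}=1\}$, and the determinant checks do go through — the points from $\lambda_{2},\dots,\lambda_{g+2}$ sit on the parabola $q=p^{2}-1$, and the three junction turns give the cross products $1+g^{2}$, $2g^{2}+3g-1$ and $g^{3}+2g^{2}+g+1$, all positive for $g\geq1$. So your argument is longer but closes a gap the paper leaves implicit; the paper's is shorter but addresses only the complementary point (that the slice meets every facet), which in your setup is absorbed into the rescaling homeomorphism. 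The only thing I would ask you to do before calling it finished is to actually write out those three determinants rather than assert their positivity.
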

\begin{proof}
First, we denote the facet of $\Delta$ defined by $\lambda_{j}$ by 
$$F_{j}=\{\, y \in \Delta \mid \langle y , \lambda_{j}\rangle =0 \,\}$$
for $j=1,\dots,g+3$. 
Next, take an element $y$ in $F_{j}$ and put 
$\kappa:=\frac{c}{\langle y, \zeta \rangle}.$
Since $\frac{1}{2}\gamma$ and $y$ are in $\Delta$ and $\zeta=\xi$ is in $\Delta_{0}^{*}$, it follows that 
$c=\frac{1}{2}\langle \gamma,\xi\rangle>0$, $\langle y, \zeta \rangle >0$ and $\kappa>0$.  
Then $\kappa y$ is in $F_{j}$ and $H_{\zeta,c}$. 
This means that the hyperplane $H_{\zeta,c}$ intersects all facets of $\Delta$. 
Thus we have proved that $\Delta_{\zeta,c}$ is a $(g+3)$-gon. 
\end{proof}

\begin{proposition}\label{app3}
Under the setting in Example \ref{ex1}, 
$$C(S)^{\sigma}_{\zeta,c}\cong\Sigma_{g}, $$
where $\Sigma_{g}$ is a closed surface of genus $g$. 
\end{proposition}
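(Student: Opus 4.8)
The plan is to analyze the ramified covering $\mu^{\sigma}:C(S)^{\sigma}\to\Delta$ restricted over the polygon $\Delta_{\zeta,c}$, and to identify the resulting surface combinatorially. The real form $C(S)^{\sigma}$ has $2^{3}=8$ connected sheets over $\mathrm{Int}\,\Delta$, which we label by sign vectors $\epsilon=(\epsilon_1,\epsilon_2,\epsilon_3)\in\{\pm1\}^3$ using the real logarithmic coordinates: over the open dense orbit $U_0$, the eight sheets correspond to the eight components of the set of real points $\{(w^1,w^2,w^3)\in(\mathbb{R}^{\times})^3\}$ according to the signs of the $w^j$. Over a facet $F_j$ of $\Delta$ (where $\langle y,\lambda_j\rangle=0$), exactly one circle degenerates, so two sheets $\epsilon$ and $\epsilon'$ get glued along the preimage of $F_j$ precisely when $\epsilon$ and $\epsilon'$ differ in the coordinate slot dictated by reducing $\lambda_j$ modulo $2$; the goodness condition of Definition \ref{good} guarantees this identification is a smooth one-to-one gluing rather than something more singular. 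Concretely, if $\bar\lambda_j\in(\mathbb{Z}/2)^3$ denotes the mod-$2$ reduction of $\lambda_j$, then crossing $F_j$ flips the sheet label $\epsilon\mapsto\epsilon\cdot(-1)^{\bar\lambda_j}$ (componentwise), where $(-1)^{\bar\lambda_j}$ is the sign vector with $-1$ in exactly the slots where $\bar\lambda_j$ is nonzero.

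With this description, $C(S)^{\sigma}_{\zeta,c}=(\mu^{\sigma})^{-1}(\Delta_{\zeta,c})$ is obtained from eight copies of the $(g+3)$-gon $\Delta_{\zeta,c}$ (one per sheet $\epsilon$), with the copy over sheet $\epsilon$ glued to the copy over sheet $\epsilon\cdot(-1)^{\bar\lambda_j}$ along the edge $e_j:=\Delta_{\zeta,c}\cap F_j$, for each of the $g+3$ edges $j$. First I would check that this gluing pattern yields a closed surface: every edge of every polygon copy is glued to exactly one other edge (the involution $\epsilon\mapsto\epsilon\cdot(-1)^{\bar\lambda_j}$ is free since $\bar\lambda_j\neq 0$ — indeed $\langle\gamma,\lambda_j\rangle=1$ is odd so $\lambda_j$ is not even), and near each vertex of $\Delta_{\zeta,c}$ the local gluing data (two adjacent edges $e_j,e_{j+1}$, whose $\lambda_j,\lambda_{j+1}$ span a rank-$2$ unimodular-ish cone by goodness) assembles the eight polygon-corners into a single disk. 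Hence $C(S)^{\sigma}_{\zeta,c}$ is a closed, and moreover orientable, surface; orientability follows because $C(S)^{\sigma}$ is Lagrangian in a Calabi--Yau cone, hence orientable, and $C(S)^{\sigma}_{\zeta,c}$ is a hypersurface in it cut out by the regular function $\langle\mu,\zeta\rangle-c$.

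It remains to compute the genus, which I would do by Euler characteristic. The CW structure has $V$ vertices, $E$ edges, and $F=8$ faces. There are $g+3$ edge-classes each appearing in $8/2=4$ glued pairs, wait — more carefully: each of the $8$ polygon copies contributes $g+3$ boundary edges, and edges are glued in pairs, so $E=8(g+3)/2=4(g+3)$. For vertices: $\Delta_{\zeta,c}$ has $g+3$ vertices, each polygon copy contributes $g+3$ corners, so $8(g+3)$ corners total, and I must count how many corners are identified at each vertex of the quotient. At the polygon-vertex $v_j=e_{j-1}\cap e_j$, the subgroup of $(\mathbb{Z}/2)^3$ generated by $(-1)^{\bar\lambda_{j-1}}$ and $(-1)^{\bar\lambda_j}$ has order $4$ (these two sign-flips are independent, again by goodness, since $\lambda_{j-1},\lambda_j$ are part of a lattice basis of a $2$-face), so the $8$ corners over $v_j$ fall into $8/4=2$ orbits; thus each polygon-vertex contributes $2$ vertices to the quotient, giving $V=2(g+3)$. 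Then $\chi=V-E+F=2(g+3)-4(g+3)+8=-2(g+3)+8=-2(g+1)=2-2g$, so $C(S)^{\sigma}_{\zeta,c}\cong\Sigma_g$. The main obstacle I anticipate is making the sheet-gluing rule precise and rigorous — pinning down exactly which sheets meet over which facet, and verifying via the goodness condition that the two sign-flips at each vertex are independent (so the orbit counts $4$ at vertices and $2$ along edges come out right); once that combinatorial bookkeeping is nailed down, the Euler-characteristic count is routine. I would double-check the small cases $g=1$ (quadrilateral, $\chi=0$, torus) and $g=2$ (pentagon, $\chi=-2$) by hand against this formula.
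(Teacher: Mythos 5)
Your combinatorial model of $C(S)^{\sigma}_{\zeta,c}$ --- eight copies of the $(g+3)$-gon $\Delta_{\zeta,c}$ indexed by sign vectors $\epsilon\in\{\pm1\}^{3}$, with the copy $\epsilon$ glued to the copy $\epsilon\cdot(-1)^{\bar\lambda_{j}}$ along the edge $E_{j}=F_{j}\cap\Delta_{\zeta,c}$ --- is exactly the one the paper derives, via the curves $\tilde c_{j}(s)=(\kappa_{1}s^{\lambda_{j}^{1}},\kappa_{2}s^{\lambda_{j}^{2}},\kappa_{3}s^{\lambda_{j}^{3}})$ passing through the facet $F_{j}$. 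Where you genuinely differ is the endgame. The paper identifies the glued surface with $\Sigma_{g}$ by ``straightforward observations'' and exhibits explicit gluing diagrams only for $g=1$ and $g=2$; you instead compute $\chi=V-E+F=2(g+3)-4(g+3)+8=2-2g$ uniformly in $g$, using goodness at each ray $F_{j-1}\cap F_{j}$ to see that $\bar\lambda_{j-1},\bar\lambda_{j}$ extend to a $\mathbb{Z}$-basis and hence are independent over $\mathbb{F}_{2}$, which is precisely what makes the vertex count $V=2(g+3)$ correct. This is more systematic than the paper's figure-checking and actually covers all $g$ at once, so it is a real improvement on that step. (Minor caveat on orientability: ``Lagrangian in a Calabi--Yau'' does not by itself give orientability; what you want is that $C(S)^{\sigma}$ is \emph{special} Lagrangian --- its Lagrangian angle is constant since $\Omega_{\gamma}$ restricts to a real form on the real locus of $U_{0}$ --- so $\mathrm{Re}(e^{-i\theta}\Omega_{\gamma})$ orients it; alternatively one can orient the eight polygons compatibly by hand.)

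There is one gap you must close: the Euler characteristic determines the genus only for a \emph{connected} closed orientable surface, and connectedness of your glued complex is equivalent to the mod-$2$ reductions $\bar\lambda_{1},\dots,\bar\lambda_{g+3}$ generating $(\mathbb{Z}/2\mathbb{Z})^{3}$. This is not a formal consequence of goodness (which only gives independence of \emph{adjacent} pairs, hence at worst two components), so it has to be verified for the specific conormals of Example \ref{ex1}. It does hold there: $\bar\lambda_{1}=(1,1,1)$, $\bar\lambda_{2}=(1,0,1)$, $\bar\lambda_{3}=(1,1,0)$ already span $(\mathbb{Z}/2\mathbb{Z})^{3}$ for every $g\geq 1$. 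With that one line added, your argument is complete and correct.
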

\begin{proof}
There exists an open dense $T_{\mathbb{C}}^{3}$-orbit on $C(S)$. 
We identify $T_{\mathbb{C}}^{3}$ with $(\mathbb{C}^{\times})^{3}$. 
It is clear that the real form of $(\mathbb{C}^{\times})^{3}$ is $(\mathbb{R}^{\times})^{3}$ 
and it has $8$ connected components $\mathbb{R}^{3}(\kappa_{1}, \kappa_{2}, \kappa_{3})$, 
where $\kappa_{i}$ are $+1$ or $-1$ and we define
$$\mathbb{R}^{3}(\kappa_{1}, \kappa_{2}, \kappa_{3})=\{\, (x_{1},x_{2},x_{3})\in\mathbb{R}^{3} \mid \kappa_{1}x_{1}>0, \kappa_{2}x_{2}>0, \kappa_{3}x_{3}>0 \,\}. $$
There is a standard diffeomorphism from each $\mathbb{R}^{3}(\kappa_{1}, \kappa_{2}, \kappa_{3})$ to $\mathbb{R}^{3}$ defined by 
$$-\log |\cdot| :\mathbb{R}^{3}(\kappa_{1}, \kappa_{2}, \kappa_{3})\rightarrow \mathbb{R}^{3}, $$
that is , $(x_{1},x_{2},x_{3})$ maps to $(-\log|x_{1}|,-\log|x_{2}|,-\log|x_{3}|)$. 
In the algebraic toric geometry, there is a concept of manifolds with corner associated with toric varieties. 
From this view point, we can consider that $\mathbb{R}^{3}$ is rescaled and embedded into $\Delta$, that is a manifold with corner. 
This means that the infinity toward the direction of $\lambda_{j}$ in $\mathbb{R}^{3}$ corresponds to the facet $F_{j}$ of $\Delta$ defined by $\lambda_{j}$.
For more general treatment, see Oda \cite{Oda}. 
In this sense, we identify $\mathbb{R}^{3}$ and $\mathrm{Int}\ \Delta$, and we identify the infinity toward the direction of $\lambda_{j}$ in $\mathbb{R}^{3}$ and the facet $F_{j}$ of $\Delta$ defined by $\lambda_{j}$.
For each inward conormal $\lambda_{j}=(\lambda^{1}_{j},\lambda^{2}_{j},\lambda^{3}_{j})$ of $\Delta$, then consider a curve $c_{j}(t)$ in $\mathbb{R}^{3}\cong \mathrm{Int}\, \Delta$ defined by 
$$c_{j}(t)=t\lambda_{j}=(\lambda^{1}_{j}t,\lambda^{2}_{j}t,\lambda^{3}_{j}t). $$
Then the pull back of $c_{j}(t)$ to $\mathbb{R}^{3}(\kappa_{1}, \kappa_{2}, \kappa_{3})$ by $-\log |\cdot|$ is 
$$\tilde{c}_{j}(t)=(\kappa_{1}e^{-\lambda^{1}_{j}t},\kappa_{2}e^{-\lambda^{2}_{j}t},\kappa_{3}e^{-\lambda^{3}_{j}t})$$
and if we put $s=e^{-t}>0$ then this curve $\tilde{c}_{j}(t)$ in $\mathbb{R}^{3}(\kappa_{1}, \kappa_{2}, \kappa_{3})$ is written by 
$$\tilde{c}_{j}(s)=(\kappa_{1}s^{\lambda^{1}_{j}},\kappa_{2}s^{\lambda^{2}_{j}},\kappa_{3}s^{\lambda^{3}_{j}}). $$
If this curve tends to the facet $F_{j}$, then it is equivalent to $t\rightarrow +\infty$ and also $s\rightarrow +0$. 
If we allow to take $s=0$, then the point $\tilde{c}_{j}(0)$ can be considered as in the facet $F_{j}$ and furthermore 
if we allow to take $s<0$, then the curve $\tilde{c}_{j}(s)$ is in 
$$\mathbb{R}^{3}((-1)^{\lambda^{1}_{j}}\kappa_{1},  (-1)^{\lambda^{2}_{j}}\kappa_{2}, (-1)^{\lambda^{3}_{j}}\kappa_{3}). $$
This means that if we prepare $8$ copies of $\Delta$ and give the labels formally to each $\Delta$ as 
\begin{align}\label{domains1}
\begin{matrix}
\Delta(+1,+1,+1),&\Delta(+1,+1,-1),&\Delta(+1,-1,+1),&\Delta(+1,-1,-1), \\
\Delta(-1,+1,+1),&\Delta(-1,+1,-1),&\Delta(-1,-1,+1),&\Delta(-1,-1,-1),
\end{matrix}
\end{align}
then $\Delta(\kappa_{1},\kappa_{2},\kappa_{3})$ and $\Delta((-1)^{\lambda^{1}_{j}}\kappa_{1},  (-1)^{\lambda^{2}_{j}}\kappa_{2}, (-1)^{\lambda^{3}_{j}}\kappa_{3})$ are glued together along 
the facet $F_{j}$ defined by $\lambda_{j}$. 

In the above observation, we consider the gluing relation of $8$ copies of $\Delta$ however, the glueing relation of $\Delta_{\zeta,c}$ is the same as $\Delta$. 
That is, if we prepare $8$ copies of $\Delta_{\zeta,c}$ and give the labels formally to each $\Delta_{\zeta,c}$ as same as (\ref{domains1}), 
then $\Delta_{\zeta,c}(\kappa_{1},\kappa_{2},\kappa_{3})$ and $\Delta_{\zeta,c}((-1)^{\lambda^{1}_{j}}\kappa_{1},  (-1)^{\lambda^{2}_{j}}\kappa_{2}, (-1)^{\lambda^{3}_{j}}\kappa_{3})$ 
are glued together along the edge $E_{j}=F_{j}\cap\Delta_{\zeta,c}$ defined by $\lambda_{j}$. This is the topological shape of $C(S)^{\sigma}_{\zeta,c}$. 

Then one can check that $C(S)^{\sigma}_{\zeta,c}\cong\Sigma_{g}$ by the straight forward observations glueing $8$ copies of $\Delta_{\zeta,c}$ as above relations. 
In Figure 1 and Figure 2, we draw the image of the way of gluing in the case $g=1$ and $g=2$ respectively. 
In these figures, we write $\Delta_{\zeta,c}(\kappa_{1},\kappa_{2},\kappa_{3})$ by $(\kappa_{1},\kappa_{2},\kappa_{3})$ for short and the edge $E_{j}$ by $j$ for short, and glue same labels together. 
Note that in Figure 2 we write a pentagon as a quadrilateral by joining edge 4 and edge 5 flatly to write a picture easily. 
\end{proof}

  \begin{figure}[h]
  \begin{center}
  \setlength\unitlength{1truecm}
  \begin{picture}(11.5,4.5)(0,0)

  
  \put(1.75,2.5){\vector(1,0){2}}
  \put(5.75,2.5){\vector(-1,0){2}}
  \put(5.75,2.5){\vector(1,0){2}}
  \put(9.75,2.5){\vector(-1,0){2}}
  
  \put(3.75,0.5){\vector(-1,0){2}}
  \put(3.75,0.5){\vector(1,0){2}}
  \put(7.75,0.5){\vector(-1,0){2}}
  \put(7.75,0.5){\vector(1,0){2}}
  
  \put(3.75,4.5){\vector(-1,0){2}}
  \put(3.75,4.5){\vector(1,0){2}}
  \put(7.75,4.5){\vector(-1,0){2}}
  \put(7.75,4.5){\vector(1,0){2}}

  \put(1.75,4.5){\vector(0,-1){2}}
  \put(3.75,2.5){\vector(0,1){2}}
  \put(5.75,4.5){\vector(0,-1){2}}
  \put(7.75,2.5){\vector(0,1){2}}
  \put(9.75,4.5){\vector(0,-1){2}}
  
  \put(1.75,0.5){\vector(0,1){2}}
  \put(3.75,2.5){\vector(0,-1){2}}
  \put(5.75,0.5){\vector(0,1){2}}
  \put(7.75,2.5){\vector(0,-1){2}}
  \put(9.75,0.5){\vector(0,1){2}}

  \put(2.05,3.5){$(+,+,+)$} 
  \put(4.05,3.5){$(-,-,+)$}
  \put(6.05,3.5){$(+,+,-)$}
  \put(8.05,3.5){$(-,-,-)$}  
  
  \put(2.05,1.5){$(-,+,-)$} 
  \put(4.05,1.5){$(+,-,-)$} 
  \put(6.05,1.5){$(-,+,+)$} 
  \put(8.05,1.5){$(+,-,+)$}

  \put(1.77,3.5){$1$} 
  \put(5.55,3.5){$1$} 
  \put(5.77,3.5){$1$} 
  \put(9.55,3.5){$1$} 
  \put(1.77,1.5){$1$} 
  \put(5.55,1.5){$1$} 
  \put(5.77,1.5){$1$} 
  \put(9.55,1.5){$1$}

  \put(2.6,2.55){$2$} 
  \put(4.6,2.55){$2$} 
  \put(6.6,2.55){$2$} 
  \put(8.6,2.55){$2$} 
  \put(2.6,2.2){$2$} 
  \put(4.6,2.2){$2$} 
  \put(6.6,2.2){$2$} 
  \put(8.6,2.2){$2$}

  \put(3.55,3.5){$3$} 
  \put(3.77,3.5){$3$} 
  \put(7.55,3.5){$3$} 
  \put(7.77,3.5){$3$} 
  \put(3.55,1.5){$3$} 
  \put(3.77,1.5){$3$} 
  \put(7.55,1.5){$3$} 
  \put(7.77,1.5){$3$}

  \put(2.6,4.2){$4$} 
  \put(4.6,4.2){$4$} 
  \put(6.6,4.2){$4$} 
  \put(8.6,4.2){$4$} 
  \put(2.6,0.55){$4$} 
  \put(4.6,0.55){$4$} 
  \put(6.6,0.55){$4$} 
  \put(8.6,0.55){$4$}

  \put(1.35,3.5){$1_{a}$} 
  \put(9.82,3.5){$1_{a}$} 
  
  \put(1.35,1.5){$1_{b}$} 
  \put(9.82,1.5){$1_{b}$}

  \put(2.6,4.6){$4_{a}$} 
  \put(2.6,0.2){$4_{a}$} 
  
  \put(4.6,4.6){$4_{b}$} 
  \put(4.6,0.2){$4_{b}$}

  \put(6.6,4.6){$4_{c}$} 
  \put(6.6,0.2){$4_{c}$}

  \put(8.6,4.6){$4_{d}$} 
  \put(8.6,0.2){$4_{d}$}

  \put(4.6, -0.3){Figure 1. $g=1$}
  \end{picture}
  \end{center}
  \end{figure}
   \begin{figure}[h]
  \begin{center}
  \setlength\unitlength{1truecm}
  \begin{picture}(11.5,4.5)(0,0.5)


  \put(1.75,2.5){\vector(1,0){2}}
  \put(5.75,2.5){\vector(-1,0){2}}
  \put(5.75,2.5){\vector(1,0){2}}
  \put(9.75,2.5){\vector(-1,0){2}}
  
  \put(3.75,0.5){\vector(-1,0){1}}
  \put(3.75,0.5){\vector(1,0){1}}
  \put(7.75,0.5){\vector(-1,0){1}}
  \put(7.75,0.5){\vector(1,0){1}}
  
  \put(2.75,0.5){\vector(-1,0){1}}
  \put(4.75,0.5){\vector(1,0){1}}
  \put(6.75,0.5){\vector(-1,0){1}}
  \put(8.75,0.5){\vector(1,0){1}}
  
  \put(3.75,4.5){\vector(-1,0){1}}
  \put(3.75,4.5){\vector(1,0){1}}
  \put(7.75,4.5){\vector(-1,0){1}}
  \put(7.75,4.5){\vector(1,0){1}}
  
   \put(2.75,4.5){\vector(-1,0){1}}
  \put(4.75,4.5){\vector(1,0){1}}
  \put(6.75,4.5){\vector(-1,0){1}}
  \put(8.75,4.5){\vector(1,0){1}}

  \put(1.75,4.5){\vector(0,-1){2}}
  \put(3.75,2.5){\vector(0,1){2}}
  \put(5.75,4.5){\vector(0,-1){2}}
  \put(7.75,2.5){\vector(0,1){2}}
  \put(9.75,4.5){\vector(0,-1){2}}
  
  \put(1.75,0.5){\vector(0,1){2}}
  \put(3.75,2.5){\vector(0,-1){2}}
  \put(5.75,0.5){\vector(0,1){2}}
  \put(7.75,2.5){\vector(0,-1){2}}
  \put(9.75,0.5){\vector(0,1){2}}

  \put(2.05,3.5){$(+,+,+)$} 
  \put(4.05,3.5){$(-,-,+)$}
  \put(6.05,3.5){$(+,+,-)$}
  \put(8.05,3.5){$(-,-,-)$}  
  
  \put(2.05,1.5){$(-,+,-)$} 
  \put(4.05,1.5){$(+,-,-)$} 
  \put(6.05,1.5){$(-,+,+)$} 
  \put(8.05,1.5){$(+,-,+)$}

  \put(1.77,3.5){$1$} 
  \put(5.55,3.5){$1$} 
  \put(5.77,3.5){$1$} 
  \put(9.55,3.5){$1$} 
  \put(1.77,1.5){$1$} 
  \put(5.55,1.5){$1$} 
  \put(5.77,1.5){$1$} 
  \put(9.55,1.5){$1$}

  \put(2.6,2.55){$2$} 
  \put(4.6,2.55){$2$} 
  \put(6.6,2.55){$2$} 
  \put(8.6,2.55){$2$} 
  \put(2.6,2.2){$2$} 
  \put(4.6,2.2){$2$} 
  \put(6.6,2.2){$2$} 
  \put(8.6,2.2){$2$}

  \put(3.55,3.5){$3$} 
  \put(3.77,3.5){$3$} 
  \put(7.55,3.5){$3$} 
  \put(7.77,3.5){$3$} 
  \put(3.55,1.5){$3$} 
  \put(3.77,1.5){$3$} 
  \put(7.55,1.5){$3$} 
  \put(7.77,1.5){$3$}

  \put(3.2,4.2){$4$} 
  \put(5.2,4.2){$5$} 
  \put(7.2,4.2){$4$} 
  \put(9.2,4.2){$5$} 
  \put(3.2,0.55){$4$} 
  \put(5.2,0.55){$5$} 
  \put(7.2,0.55){$4$} 
  \put(9.2,0.55){$5$} 
  
   \put(2.3,4.2){$5$} 
  \put(4.3,4.2){$4$} 
  \put(6.3,4.2){$5$} 
  \put(8.3,4.2){$4$} 
  \put(2.3,0.55){$5$} 
  \put(4.3,0.55){$4$} 
  \put(6.3,0.55){$5$} 
  \put(8.3,0.55){$4$}

  \put(1.35,3.5){$1_{a}$} 
  \put(9.82,3.5){$1_{a}$}

  \put(1.35,1.5){$1_{b}$} 
  \put(9.82,1.5){$1_{b}$}

  \put(3.2,4.6){$4_{a}$} 
  \put(3.2,0.2){$4_{a}$}

  \put(5.2,4.6){$5_{b}$} 
  \put(5.22,0.2){$5_{d}$}

  \put(7.2,4.6){$4_{c}$} 
  \put(7.2,0.2){$4_{c}$}

  \put(9.2,4.6){$5_{d}$} 
  \put(9.2,0.2){$5_{b}$}

  \put(2.3,4.6){$5_{a}$} 
  \put(2.3,0.2){$5_{c}$}

  \put(4.3,4.6){$4_{b}$} 
  \put(4.3,0.2){$4_{b}$}

  \put(6.3,4.6){$5_{c}$} 
  \put(6.3,0.2){$5_{a}$}

  \put(8.3,4.6){$4_{d}$} 
  \put(8.3,0.2){$4_{d}$}

  \put(4.6, -0.3){Figure 2. $g=2$}
  \end{picture}
  \end{center}
  \end{figure}



\bibliographystyle{amsalpha}

\end{document}